\newtheorem{theorem}{Theorem}[section]
\newtheorem{cor}[theorem]{Corollary}
\newtheorem{lemma}[theorem]{Lemma}
\newtheorem{prop}[theorem]{Proposition}
\newtheorem{defn}[theorem]{Definition}
\newtheorem{remark}[theorem]{Remark}
\newtheorem{exa}[theorem]{Example}
\newtheorem{que}[theorem]{Question}
\title {Circle actions on oriented 4-manifolds} 
\author{Donghoon Jang}
\address{Department of Mathematics, Pusan National University, 2, Busandaehak-ro 63beon-gil, Geumjeong-gu, Busan, 46241, Republic of Korea.}
\email{donghoonjang@pusan.ac.kr}
\author{Oleg R. Musin}
\address{School of Mathematical and Statistical Sciences, University of Texas Rio Grande Valley, One West University Boulevard, Brownsville, TX, 78520, USA.} 
\email{oleg.musin@utrgv.edu}
\thanks{Keywords: oriented 4-manifolds, circle actions, Atiyah-Hirzebruch formula, graphs, weights, equivariant connected sum, equivariant splitting}
\begin{document}

\maketitle

\begin{abstract}
In the present paper, we consider an action of the circle group on a compact oriented 4-manifold. We derive the Atiyah-Hirzebruch formula for the manifold, and associate a graph in terms of data on the fixed point set. We show in the case of isolated fixed points that if an abstract graph satisfies the Atiyah-Hirzebruch formula, then there exists a corresponding 4-dimensional oriented $S^1$-manifold.
\end{abstract}

\tableofcontents

\section{Introduction}

%We consider an equivariant classification of smooth actions of the circle group $S^1$ on oriented  4--manifolds. 

The problem of classification of torus and circle actions on 4-manifolds was considered in 1970's by Orlik and  Raymond \cite{OR1, OR2}, Fintushel \cite{F1, F2, F3}, Pao \cite{pao77a, pao77b}, and Yoshida \cite{yos78}, and later by Melvin \cite{mel81}, Melvin and Parker \cite{MP}, Huck \cite{huc95}, Huck and Puppe \cite{HP}, etc. 
We refer to \cite{edm09} for a survey on group actions on 4-manifolds.
Circle actions on different types of 4-manifolds have been also studied; Carrell, Howard, and Kosniowski \cite{CHK} studied complex manifolds, Ahara and Hattori \cite{AH}, Audin \cite{Au}, and Karshon \cite{Ka} studied symplectic manifolds, and the first named author studied almost complex manifolds \cite{jan19b}.

Orlik and Raymond proved that a $T^2$-action on a closed simply connected oriented 4-manifold is an equivariant connected sum of copies of $\pm \mathbb{CP}^2$ and $S^2$-bundles over $S^2$ \cite{OR1}. Fintushel \cite{F2} and Yoshida \cite{yos78} showed that an $S^1$-action on a closed simply connected 4-manifold is a connected sum of a homotopy $S^4$ and copies of $\pm \mathbb{CP}^2$ and $S^2 \times S^2$; their connected sums are not equivariant in general. Orlik and Raymond showed that a $T^2$-action on a closed orientable 4-manifold is determined by its orbit data \cite{OR2}; Fintushel proved a similar result for $S^1$-actions \cite{F3}.

Let $M$ be a 4-dimensional compact oriented $S^1$-manifold. After reviewing necessary background in Section \ref{s2}, in Section \ref{s3} we derive the Atiyah-Hirzebruch formula for $M$ (Theorem \ref{AHformula}), that is, the Atiyah-Singer index formula for the equivariant index of the signature operator on $M$. The formula is expressed in terms of signs and weights at isolated fixed points and the Euler numbers of the normal bundles of fixed surfaces. We discuss several consequences of Theorem \ref{AHformula}.

In Section \ref{s4}, using the Atiyah-Hirzebruch formula, in Theorem \ref{graph} we associate a certain type of graph to $M$ that we call a graph of weights (Definitions \ref{defngraph} and \ref{graphofweights}). The graph contains information on the data on its fixed point set, signs and weights at isolated fixed points and the Euler numbers of the normal bundles of fixed surfaces. Moreover, any edge with label $w$ bigger than 1 corresponds to an invariant 2-sphere of weight $w$ containing two fixed points that are the vertices of the edge; see (2) of Theorem \ref{graph}.
We note that a graph of weights does not determine an $S^1$-manifold uniquely; see Remark \ref{non-uniqueness}.

In Section \ref{s5}, we discuss equivariant connected sum and splitting. A traditional connected sum of two $m$-manifolds removes a ball of each manifold and glue the boundary spheres. 
We will also take a connected sum along neighborhoods of submanifolds, and also call its reverse operation a splitting.
Utilizing the graph that we associate to $M$, in Theorem \ref{ReduceWeight} we show that we can equivariantly split $M$ into $M_0$ and copies of $\pm \mathbb{CP}^2$, where $M_0$ is another $S^1$-manifold which is minimal in the sense that every weight in the normal bundle of any fixed component is 1. Therefore, for a further topological classification, it is natural to ask if there also exist invariant 2-spheres for weight 1, see Question \ref{quesphere}; one may ask the same question for higher dimensional oriented $S^1$-manifolds.

In Section \ref{s6}, we prove a partial converse to a combination of Theorem \ref{AHformula} and Theorem \ref{graph}, that if an abstract graph of weights whose vertices all correspond to isolated fixed points (see Definition \ref{defngraph}) satisfies the Atiyah-Hirzebruch formula (Theorem \ref{AHformula}), then there exists a corresponding 4-dimensional compact oriented $S^1$-manifold; see Theorem \ref{graphconverse}.

\section*{Acknowledgements}

The authors would like to thank Mikiya Masuda for fruitful comments.
Donghoon Jang was supported by the National Research Foundation of Korea(NRF) grant funded by the Korea government(MSIT) (2021R1C1C1004158).

\section{Background and preliminaries} \label{s2}

Let a group $G$ act on a manifold $M$. Throughout this paper, any group action on a manifold is assumed to be effective. We denote its fixed point set by $M^G$, that is,
\begin{center}
$M^G=\{m \in M \, | \, g \cdot m=m \textrm{ for all }g \in G\}$.
\end{center}
If $H$ is a subgroup of $G$, $H$ also acts on $M$, and its fixed point set $M^H$ is defined analogously.

Let the circle group $S^1$ act on a $2n$-dimensional oriented manifold $M$. Let $F$ be a fixed component of dimension $2m$, and let $q$ be a point in $F$. The normal space $N_qF$ of $F$ to $M$ at $q$ decomposes into the sum of 2-dimensional irreducibles
$$N_qF=L_{F,1}\oplus \cdots \oplus L_{F,n-m},$$
where on each $L_{F,i}$ the circle acts by multiplication by $g^{w_{F,i}}$ for some non-zero integer $w_{F,i}$. These integers $w_{F,i}$ are the same for all $q \in F$ and called the \textbf{weights} of $F$. Throughout this paper, we choose an orientation of $L_{F,i}$ so that $w_{F,i}$ is positive for each $i$. The choice of the orientation of each $L_{F,i}$ then determines an orientation of $F$.

Let $p$ be an isolated fixed point. Then the tangent space at $p$ has two orientations; one induced from the orientation of $M$ and the other that we chose on $L_{p,1}\oplus \cdots \oplus L_{p,n}$. We define the \textbf{sign} of $p$, denoted by $\varepsilon_p$, to be $+1$ if the two orientations agree and $-1$ otherwise.

Let the circle group $S^1$ act on a manifold $M$. As a subgroup, the cyclic subgroup $\mathbb{Z}_w$ also acts on $M$. Its fixed point set $M^{\mathbb{Z}_w}$ is a submanifold of $M$.

Suppose that $M$ is orientable and $w>2$. Let $F$ be a component of $M^{\mathbb{Z}_w}$. Let $p \in F$. The normal space $N_pF$ of $F$ decomposes into irreducible $\mathbb{Z}_w$-representations $L_1 \oplus \cdots \oplus L_m$. On each $L_i$, the group $\mathbb{Z}_w$ acts as multiplication by a root of unity, and thus each $L_i$ has (real) dimension 2 and has an orientation induced and preserved by the group $\mathbb{Z}_w$. Since this holds for all points in $F$, the normal bundle of $F$ is orientable. Because $M$ is orientable, it follows that $F$ is also orientable. For references, see \cite[Lemma 12]{Kol}, \cite[Theorem 3.5.2]{Z}.

\begin{theorem} \label{l25} 
Let the circle group $S^1$ act effectively on an oriented manifold $M$. Let $w \geq 3$ be an integer. Then the $\mathbb{Z}_w$-fixed point set $M^{\mathbb{Z}_w}$ is orientable.
\end{theorem}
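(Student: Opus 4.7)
The plan is to formalize the informal argument given in the paragraph just above the theorem. For each connected component $F$ of $M^{\mathbb{Z}_w}$ the goal is to show that $F$ is orientable. Since $M$ is oriented, the restriction $TM|_F$ is an oriented bundle, and via the short exact sequence
\[
0 \to TF \to TM|_F \to \nu \to 0
\]
of bundles on $F$ (where $\nu$ denotes the normal bundle of $F$ in $M$), it suffices to produce an orientation of $\nu$; the orientation of $TF$ is then forced as the quotient orientation.

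To orient $\nu$ fiberwise I would first choose an $S^1$-invariant Riemannian metric on $M$ by averaging, using compactness of $S^1$. Fix $p \in F$ and consider the fiber $N_pF$. The stabilizer $S^1_p \subset S^1$ is a closed subgroup containing $\mathbb{Z}_w$, hence either all of $S^1$ or a finite cyclic group $\mathbb{Z}_k$ with $w \mid k$. In either case $\mathbb{Z}_w$ acts orthogonally on $N_pF$ with no nonzero fixed vectors. The key point is that since $\mathbb{Z}_w$ sits inside the connected group $S^1$, this fiberwise representation arises from an actual circle subrepresentation of $T_pM$, via the slice theorem for $S^1_p$ at $p$. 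Consequently $N_pF$ splits canonically into $2$-real-dimensional summands on each of which the circle acts by a nontrivial character whose restriction to $\mathbb{Z}_w$ is a nontrivial $w$-th root of unity, and each summand carries a canonical orientation once one fixes the sign convention (as in the paper) that the circle weight on it is positive.

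The next step is smoothness: the isotypic decomposition of an equivariant vector bundle varies smoothly in the base, and on each isotypic piece the chosen orientation is canonical, so the pointwise orientations glue to a smooth orientation of $\nu$. Combined with the orientation of $TM|_F$ induced by $M$, this orients $TF$ and shows that $F$ is orientable. Repeating for each component yields orientability of $M^{\mathbb{Z}_w}$.

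The main obstacle is justifying the $2$-dimensional splitting of $N_pF$ at points $p \in F$ that are not themselves globally $S^1$-fixed. At a globally $S^1$-fixed $p$ the splitting is simply the $S^1$-weight decomposition of $T_pM$ restricted to the normal directions; at a non-fixed $p$ one has to invoke the slice theorem for the stabilizer $S^1_p$ to produce the analogous local model. In particular, one must rule out the potential pathology, when $w$ is even, of a $1$-dimensional $(-1)$-eigenspace of the generator of $\mathbb{Z}_w$ inside $N_pF$, which would obstruct an orientation; the ambient circle action forces any such eigenspace to be assembled from $2$-dimensional circle weight spaces and hence to be even-dimensional. The references \cite[Lemma 12]{Kol} and \cite[Theorem 3.5.2]{Z} cited immediately before the theorem handle precisely this subtlety.
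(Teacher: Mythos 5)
Your proposal follows essentially the same route as the paper, whose entire proof is the paragraph preceding the theorem: decompose the normal bundle of a component of $M^{\mathbb{Z}_w}$ into two-dimensional summands carrying canonical $\mathbb{Z}_w$-invariant orientations, conclude that the normal bundle is orientable, and combine this with the orientability of $M$ to orient the component, deferring the one delicate point (ruling out an odd-dimensional $(-1)$-eigenspace when $w$ is even) to \cite[Lemma 12]{Kol} and \cite[Theorem 3.5.2]{Z} exactly as the paper does. If anything you are more explicit than the paper, which asserts the two-dimensionality of each summand without comment; the only caution is that at a point whose stabilizer is a finite cyclic group the normal space is a priori only a representation of that finite group rather than of $S^1$, so your phrase ``arises from an actual circle subrepresentation'' is not literally available there and that step genuinely rests on the cited references (or on the observation that each element of the connected group $S^1$ acts preserving the orientation of $T_pM$, which forces the $(-1)$-eigenspace to be even-dimensional).
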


If $\dim M=4$, then a component of the $\mathbb{Z}_2$-fixed point set containing an $S^1$-fixed point is also orientable.

\begin{lemma} \label{Z2fxd}
Let $M$ be a 4-dimensional compact oriented $S^1$-manifold. Suppose that an isolated fixed point $p$ has a weight 2. Then the component of $M^{\mathbb{Z}_2}$ containing $p$ is orientable.
\end{lemma}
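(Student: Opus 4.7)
The plan is first to analyze the local picture at $p$. If the weights at $p$ are $(a,2)$, then $a$ must be odd: otherwise $\mathbb{Z}_2\subset S^1$ acts trivially on $T_pM$ and hence on a neighborhood of $p$, so the component $F$ is four-dimensional at $p$; being open and closed in the compact connected manifold $M$, it would equal $M$, contradicting effectiveness of the $S^1$-action. With $a$ odd, $\mathbb{Z}_2$ acts as $-\mathrm{id}$ on $L_{p,1}$ and trivially on $L_{p,2}$, so $T_pF=L_{p,2}$, $N_pF=L_{p,1}$, and $F$ is a closed connected $2$-dimensional submanifold of $M$.

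Next I would reduce orientability of $F$ to orientability of the normal bundle $N:=N_FM$: since $TM|_F=TF\oplus N$ is oriented, additivity of $w_1$ gives $w_1(TF)=w_1(N)$, so it suffices to show $N$ is orientable. The $S^1$-action on $F$ has $\mathbb{Z}_2$ in its kernel, so the effective action is by $S^1/\mathbb{Z}_2\cong S^1$, and $p$ remains isolated in the fixed set of this induced action (accumulating fixed points would give $S^1$-fixed points of $M$ near $p$). By the classification of closed $2$-manifolds admitting an effective circle action with a fixed point, $F$ is either $S^2$ or $\mathbb{RP}^2$: higher genus fails by Euler characteristic, and the torus and Klein bottle admit no circle action with a fixed point. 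The $S^2$ case is immediate.

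If instead $F=\mathbb{RP}^2$, then the induced action is equivalent to the standard rotation, which has an exceptional orbit $E\subset F$ whose $S^1/\mathbb{Z}_2$-stabilizer is $\mathbb{Z}_2$; pulling back through the double cover $S^1\to S^1/\mathbb{Z}_2$, the $S^1$-stabilizer of every point of $E$ is $\mathbb{Z}_4\subset S^1$. Because this $\mathbb{Z}_4$ fixes $E$ pointwise, its generator $g$ acts on $N|_E$ by a bundle endomorphism $J$, and since $g^2=-1\in\mathbb{Z}_2$ acts fiberwise as $-\mathrm{id}$ we have $J^2=-\mathrm{id}$. Thus $J$ is a complex structure on $N|_E$, so $N|_E$ is orientable. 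Since $[E]$ generates $H_1(\mathbb{RP}^2;\mathbb{Z}/2)$, this forces $w_1(N)=0$, and hence $w_1(TF)=0$, contradicting the non-orientability of $\mathbb{RP}^2$. The main obstacle will be justifying the two classification inputs (that $F\in\{S^2,\mathbb{RP}^2\}$ and that any effective circle action on $\mathbb{RP}^2$ has an exceptional orbit with $\mathbb{Z}_2$-stabilizer); both are standard consequences of the theory of $S^1$-actions on surfaces, but I would want to confirm that our $S^1/\mathbb{Z}_2$-action really matches the standard rotation model up to equivariant diffeomorphism.
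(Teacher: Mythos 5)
Your proposal is correct and follows essentially the same route as the paper: reduce to the case $F=\mathbb{RP}^2$, locate the core circle $\mathbb{RP}^1$ with $\mathbb{Z}_4$-isotropy in $M$, observe that the $\mathbb{Z}_4$-action makes the normal bundle of $F$ restricted to that circle a complex (hence orientable) line bundle, and derive a contradiction from the Möbius-band normal bundle of $\mathbb{RP}^1$ in $\mathbb{RP}^2$ together with the orientability of $M$. The only cosmetic difference is that you phrase the final contradiction via $w_1(TF)=w_1(N_FM)$ and the generator of $H_1(\mathbb{RP}^2;\mathbb{Z}/2)$, whereas the paper argues directly with orientability of the Whitney sum along $Z=\mathbb{RP}^1$.
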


\begin{proof}
Let $F$ denote the component of $M^{\mathbb{Z}_2}$ that contains $p$. Assume on the contrary that $F$ is not orientable. The component $F$ is a compact submanifold of $M$, and the $S^1$-action on $M$ acts on $F$ as a restriction with a fixed point $p$. This implies that $\dim F=2$. As shown in \cite{Au2}, by the classification of $S^1$-actions on 2-dimensional compact manifolds, these imply that $F$ is $\mathbb{RP}^2$; moreover, the action of $S^1/\mathbb{Z}_2$ on $F$ has $Z:=\mathbb{RP}^1$ as a component of the $\mathbb{Z}_2$-fixed point set, and the normal bundle $N_1$ of $Z$ in $F$ is isomorphic to the canonical line bundle over $Z=\mathbb{RP}^1$, that is, a neighborhood of $Z$ in $F$ is diffeomorphic to an open M\"obius strip. Therefore, $Z$ is a $\mathbb{Z}_4$-fixed component of $M$. The normal bundle of $Z$ in $M$ splits into the Whitney sum of the normal bundle $N_1$ of $Z$ in $F$ and a real 2-dimensional $\mathbb{Z}_4$-vector bundle $N_2$ over $Z$, on which $\mathbb{Z}_4$ acts on fibers freely outside the zero section. Then $N_1$ is non-orientable while $N_2$ is orientable because the $\mathbb{Z}_4$-action on fibers makes $N_2$ a complex line bundle. This contradicts the orientability of $M$.
\end{proof}

A fixed point with weight $w>1$ lies in an invariant 2-sphere with weight $w$, and the 2-sphere contains another $S^1$-fixed point.

\begin{lemma} \label{isotropysphere}
Let $M$ be a 4-dimensional compact oriented $S^1$-manifold. Let $p$ be an isolated fixed point and let $w>1$ be a weight at $p$. Then the component of $M^{\mathbb{Z}_w}$ containing $p$ is a 2-sphere, invariant under the $S^1$-action, and it contains another fixed point $q$ that has a weight $w$.
\end{lemma}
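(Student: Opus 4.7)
The plan is to identify the component $F\subset M^{\mathbb{Z}_w}$ containing $p$ as an $S^1$-invariant $2$-sphere, then invoke the classification of $S^1$-actions on $S^2$ to produce the second fixed point $q$ and read off its weight.

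First, $F$ is $S^1$-invariant because $\mathbb{Z}_w$ is central in $S^1$, so $S^1$ preserves $M^{\mathbb{Z}_w}$, and connectedness of $S^1$ forces each component to be fixed setwise. To see $\dim F = 2$, I would decompose $T_pM = L_{p,1}\oplus L_{p,2}$ with weights $w_1 = w$ and $w_2$. The tangent space $T_pF$ is the $\mathbb{Z}_w$-fixed subspace of $T_pM$, which contains $L_{p,i}$ exactly when $w\mid w_i$. This holds for $i=1$. If it held for $i=2$ as well, then $T_pF = T_pM$ would make $F$ both open and closed in $M$, so $F$ would contain the connected component of $M$ through $p$, forcing $\mathbb{Z}_w$ to act trivially there and contradicting effectiveness of the $S^1$-action. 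Hence $T_pF = L_{p,1}$ and $\dim F = 2$. By Theorem \ref{l25} (for $w\ge 3$) or Lemma \ref{Z2fxd} (for $w=2$), $F$ is orientable, so $F$ is a closed connected oriented surface.

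Next, the $S^1$-action on $F$ factors through $S^1/\mathbb{Z}_w\cong S^1$; its differential at $p$ acts on $T_pF = L_{p,1}$ with nontrivial weight, so $p$ is an isolated point of the $S^1$-fixed set on $F$. This gives $\chi(F)\ge 1$ via $\chi(F) = \chi(F^{S^1})$, which among closed oriented connected surfaces yields $F\cong S^2$. The induced $S^1/\mathbb{Z}_w$-action on $F$ is effective (any larger isotropy at $p$ would have to fix $T_pF$ pointwise, forcing its lift into $\mathbb{Z}_w$), so by the classification of $S^1$-actions on $S^2$ it is a standard rotation with exactly two fixed points; call the other one $q$. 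Identifying $S^1/\mathbb{Z}_w \cong S^1$ through the $w$-th power map, the rotation has weight $+1$ at each pole under the positive convention, which lifts back to weight $w$ for the original $S^1$-action on $T_qF$. Thus $q$ is an $S^1$-fixed point of $M$ with $w$ among its weights. The main obstacle is the dimension step --- ruling out $w\mid w_2$ depends essentially on the effectiveness hypothesis --- but once $\dim F = 2$ is in hand, the orientability results together with the rotation picture on $S^2$ bring the argument home routinely.
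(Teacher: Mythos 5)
Your proof is correct and follows essentially the same route as the paper's: reduce to $\dim F=2$ using effectiveness (the paper phrases this as coprimality of the two weights at $p$, which is the same fact your open--closed argument establishes), obtain orientability from Theorem \ref{l25} and Lemma \ref{Z2fxd}, and conclude via the classification of circle actions on closed orientable surfaces. You merely supply more detail than the paper for the last step (the Euler characteristic count identifying $F\cong S^2$ and the computation of the weight $w$ at $q$), which the paper leaves as an assertion.
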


\begin{proof}
Let $F$ be the component of $M^{\mathbb{Z}_w}$ that contains $p$. Because $M$ is compact, the component $F$ is a closed submanifold of $M$. Since the action is effective, two weights at $p$ are relatively prime and $F$ has dimension 2. By Theorem \ref{l25} (for $w>2$) and Lemma \ref{Z2fxd} (for $w=2$), $F$ is orientable. The $S^1$-action on $M$ restricts to an action on this 2-dimensional closed orientable manifold $F$ and has a fixed point $p$. This implies that $F$ is a 2-sphere and has another fixed point $q$ with a weight $w$.
\end{proof}

\section{Atiyah--Hirzebruch formula in dimension 4} \label{s3}

\subsection{Rigidity of genera}

By rigidity of a genus of a manifold we mean that if a group (torus) acts on a certain type of (compact) manifold, then its equivariant genus is equal to its ordinary genus; in particular, the equivariant genus is a constant.

Atiyah and Hirzebruch first proved such a rigidity result that if the circle group acts on a compact (oriented) spin manifol, then its equivariant $\hat{A}$-genus vanishes, by using the Atiyah-Singer index theorem \cite{AHr}. Krichever used the equivariant cobordism theory to prove that for a unitary $S^1$-manifold, the Hirzebruch $T_{x,y}$-genus is rigid \cite{kri74}. Rigidity of genera has been studied extensively. We refer to \cite{BPR} for a summury of the history on the theory of equivariant genera.

\subsection{Oriented manifold, dimension 4}

For a compact oriented manifold, the $L$-genus is the genus of the power series $\frac{\sqrt{x}}{\tanh \sqrt{x}}$. The Hirzebruch signature theorem states that for a compact oriented manifold $M$, its $L$-genus evaluated on the fundamental class of $M$ is equal to the signature of $M$. The Atiyah-Signer index theorem states that the signature of $M$ is equal to the (analytic) index of the signature operator on $M$ \cite{AS}.

Let the circle act on a $2n$-dimensional compact oriented manifold $M$. For each element $z$ of the circle group $S^1$, its equivariant index of the signature operator on $M$ is defined. Let $F$ be a fixed component. Let $\dim F=2m$ and $TM|_F=TF \oplus L_1 \oplus \cdots \oplus L_{n-m}$, where the circle acts on each $L_j$ with weight $w_j$. Denote by $c_1(L_j)$ the first Chern class of $L_j$, $P(F)=\prod_{i=1}^m (1+x_i^2)$ the total Pontryagin class of $F$, and $[F]$ the fundamental homology class of $F$. 

The signature (in other words, the $L$-genus) of a compact oriented $S^1$-manifold is rigid; moreover, the following formula holds for the equivariant signature.

\begin{theorem}\label{EquiSign} \cite[p. 72]{HBJ} Let the circle act on a $2n$-dimensional compact oriented manifold $M$. The equivariant signature $\textrm{sign}(z,M)$ of $M$ is
\begin{center}
$\displaystyle \mathrm{sign}(M)=\mathrm{sign}(z,M)=\sum_{F \subset M^{S^1}} \left\{ \left( \prod_{i=1}^m x_i \frac{1+e^{-x_i}}{1-e^{-x_i}} \right) \left( \prod_{j=1}^{n-m} \frac{1+z^{w_j}e^{-c_1(L_j)}}{1-z^{w_j}e^{-c_1(L_j)}} \right) \right\} [F]$
\end{center}
for all indeterminates $z$, and is a constant.
\end{theorem}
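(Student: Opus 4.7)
The plan is to apply the Atiyah--Bott--Segal--Singer equivariant fixed point formula to the signature operator. By definition, $\mathrm{sign}(z,M)$ is the equivariant index of the signature operator $D_{\mathrm{sign}}$, an $S^1$-equivariant elliptic operator whose virtual character takes the value $\mathrm{sign}(M)$ at $z=1$ by the Hirzebruch signature theorem. Since the fixed-point set $M^{S^1}$ is a finite disjoint union of compact submanifolds (isolated points or fixed surfaces in our dimension), the equivariant index theorem expresses $\mathrm{sign}(z,M)$ as a sum of local topological contributions, one from each component $F\subset M^{S^1}$.

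The core computation is identifying the local contribution at a fixed component $F$ of dimension $2m$. This splits as a product of a tangential and a normal factor. The tangential factor encodes the $L$-class of $F$ via the multiplicative sequence with characteristic series
\[
\frac{x(1+e^{-x})}{1-e^{-x}}=x\coth(x/2),
\]
which I would apply to the Pontryagin roots $x_i$ of $TF$ (using $P(F)=\prod(1+x_i^2)$). The normal factor comes from writing $N_FM=\bigoplus_{j=1}^{n-m}L_j$ as a sum of $S^1$-equivariant complex line bundles of weight $w_j$; the Bott-formula contribution of each $L_j$ in the signature operator's symbol is $(1+z^{w_j}e^{-c_1(L_j)})/(1-z^{w_j}e^{-c_1(L_j)})$. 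Multiplying and taking the cap product with $[F]$ gives exactly the summand displayed in the theorem.

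The last step is rigidity, i.e.\ showing the sum is independent of $z$. Since $\mathrm{sign}(z,M)$ is the character of a virtual finite-dimensional $S^1$-representation, it is a Laurent polynomial in $z$; however, the right-hand side is a priori only a rational function with apparent poles at roots of unity $z^{w_j}=1$. Matching these two descriptions forces all such poles to cancel globally across the sum over $F$. I would then bound the behavior of each summand as $z\to 0$ and $z\to\infty$ (expanding the geometric series in opposite directions) to see that the Laurent polynomial has degree $0$ in $z$, hence is constant, and evaluating at $z=1$ identifies this constant with $\mathrm{sign}(M)$ by the ordinary index theorem.

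The main obstacle is precisely the rigidity step: the individual fixed-component contributions really do have poles at the weight roots of unity, so one cannot argue termwise. The cancellation is a global phenomenon reflecting that $D_{\mathrm{sign}}$ is defined over all of $M$, and making this rigorous requires either the virtual-character argument sketched above or a more delicate residue analysis, as in \cite{HBJ}.
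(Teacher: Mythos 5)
The paper offers no proof of this statement to compare against: it is quoted directly from Hirzebruch--Berger--Jung \cite[p.~72]{HBJ}, so your write-up is necessarily a reconstruction of the cited source rather than a parallel to an internal argument. As such a reconstruction it is essentially correct: the local contributions you describe are exactly those of the Atiyah--Bott--Segal--Singer $G$-signature theorem, with tangential factor $\prod_i x_i\coth(x_i/2)$ on the Pontryagin roots of $TF$ and normal factor $\prod_j\coth\bigl((c_1(L_j)+w_ju)/2\bigr)$ for $z=e^{u}$, and the two rational expressions (character versus fixed-point sum) agree on the dense set of topological generators, hence identically. One substantive remark on what you call the main obstacle: for the signature operator the rigidity step does not require the pole-cancellation and degree-bounding analysis you sketch (that is the Atiyah--Hirzebruch argument, which is genuinely needed for the $\hat{A}$-genus of spin manifolds, where no cohomological model of the index is available). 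Here there is a one-line argument: $\mathrm{sign}(z,M)$ is the character of the virtual representation $H^+(M;\mathbb{R})\ominus H^-(M;\mathbb{R})$ of harmonic forms, and since $S^1$ is connected, every $g\in S^1$ is homotopic to the identity and so acts trivially on $H^*(M;\mathbb{R})$; the character is therefore identically the constant $\mathrm{sign}(M)$. Your growth argument at $z\to 0$ and $z\to\infty$ does also work (each isolated-point contribution tends to a finite limit and each positive-dimensional contribution stays bounded), but be careful at the last step: you cannot literally evaluate the right-hand side at $z=1$, where every local term has a pole; the identification of the constant with $\mathrm{sign}(M)$ must be made on the character side, where $z=1$ returns the virtual dimension, or equivalently at $z=0$ as the paper does in the proof of Theorem \ref{AHformula}.
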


Note that the signature is the only rigid Hirzebruch genera for oriented manifolds \cite[Theorem 4.2]{mus11}.

Now let $M$ have dimension 4. We label isolated fixed points by $p_1$, $\cdots$, $p_m$ and fixed surfaces by $F_1$, $\cdots$, $F_k$. Let $w_{i1}$ and $w_{i2}$ denote the weights at $p_i$, for $1 \leq i \leq m$. Let $n_j$ denote the Euler number of the normal bundle of $F_j$, for $1 \leq j \leq k$. With these notations, we derive the Atiyah-Hirzebruch formula for a 4-dimensional compact oriented $S^1$-manifold.

\begin{theorem}\label{AHformula}
Let $S^1$ act on a 4-dimensional compact oriented manifold $M$. The signature $\mathrm{sign}(M)$ of $M$ satisfies
\begin{equation} \label{ASformula}
\displaystyle 
\mathrm{sign}(M)=\sum\limits_{i=1}^m\varepsilon_i\, {\frac{(1+z^{w_{i1}})(1+z^{w_{i2}})}{(1-z^{w_{i1}})(1-z^{w_{i2}})}}-\sum\limits_{j=1}^k\frac{4zn_j}{(1-z)^2}=\sum\limits_{i=1}^m\varepsilon_i
\end{equation}
for all indeterminates $z$, and
$$
\mathrm{sign}(M)=\frac{1}{3}p_1(M),
$$
where $p_1(M)$ is the first Pontryagin class of $M$.
\end{theorem}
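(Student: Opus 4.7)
My plan is to specialize Theorem \ref{EquiSign} to $n = 2$, separating the contributions according to whether the fixed component is an isolated point (so $m = 0$) or a fixed $2$-surface (so $m = 1$), and then reconciling the final constant value with the Hirzebruch signature theorem.

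First, at an isolated fixed point $p_i$, the factor $\prod_{i=1}^m x_i \frac{1+e^{-x_i}}{1-e^{-x_i}}$ is empty, and the two normal line bundles contribute the value of $\prod_{j=1}^{2} \frac{1+z^{w_{ij}} e^{-c_1(L_{ij})}}{1-z^{w_{ij}} e^{-c_1(L_{ij})}}$ at $c_1 = 0$, namely $\frac{(1+z^{w_{i1}})(1+z^{w_{i2}})}{(1-z^{w_{i1}})(1-z^{w_{i2}})}$. The factor $\varepsilon_i$ appears because the evaluation on $[F]$ in Theorem \ref{EquiSign} uses the orientation induced from $M$, while the weights are recorded with respect to the orientation of $L_{i1} \oplus L_{i2}$ coming from positivity; these two orientations differ by the sign $\varepsilon_i$ of Section \ref{s2}.

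For a fixed surface $F_j$, there is a single tangential variable $x = x_1$ and a single normal class $c = c_1(L)$, where $L$ is the normal line bundle. I would Taylor-expand
\begin{align*}
x\,\frac{1+e^{-x}}{1-e^{-x}} &= 2 + \tfrac{1}{6}x^2 + O(x^4), \\
\frac{1+ze^{-c}}{1-ze^{-c}} &= \frac{1+z}{1-z} - \frac{2z}{(1-z)^2}\,c + O(c^2),
\end{align*}
and extract the degree-$2$ part of the product, which is $-\frac{4z}{(1-z)^2}\,c + \frac{1+z}{6(1-z)}\,x^2$. Pairing with $[F_j]$ gives $c_1(L)[F_j] = n_j$, the Euler number of the normal bundle, while $x^2[F_j] = p_1(F_j)[F_j] = 0$ since $\dim F_j = 2$. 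Thus the fixed surface $F_j$ contributes $-\frac{4zn_j}{(1-z)^2}$, and summing over all fixed components yields the first displayed equality of \eqref{ASformula}.

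For the second equality, I would invoke the rigidity asserted in Theorem \ref{EquiSign}: the whole expression is independent of the indeterminate $z$. Setting $z = 0$ annihilates every surface contribution and reduces each isolated-point factor to $1$, leaving $\sum_i \varepsilon_i$. The final identity $\mathrm{sign}(M) = \tfrac{1}{3}p_1(M)$ is simply the Hirzebruch signature theorem in dimension $4$, since the top $L$-class is $L_1 = \tfrac{1}{3} p_1$.

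The main obstacle is the fixed-surface calculation: one must expand both power series to the correct order, identify the coefficient of $c_1(L)$ with the Euler number $n_j$, and observe that the $x^2$ contribution drops out because a surface carries no first Pontryagin class. The rest is bookkeeping of signs and an invocation of the rigidity half of Theorem \ref{EquiSign}.
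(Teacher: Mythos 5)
Your proposal is correct and follows essentially the same route as the paper: specialize Theorem \ref{EquiSign} to dimension $4$, note that an isolated point contributes $\varepsilon_i\frac{(1+z^{w_{i1}})(1+z^{w_{i2}})}{(1-z^{w_{i1}})(1-z^{w_{i2}})}$, expand the two series for a fixed surface to extract the degree-$2$ term $-\frac{4z}{(1-z)^2}c_1(L)$ (the paper likewise observes that the tangential factor is $2+0\cdot x+O(x^2)$, so only the normal class survives the pairing with $[F_j]$), then set $z=0$ and invoke the Hirzebruch signature theorem. No gaps.
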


\begin{proof}
First, we consider an isolated fixed point $p_i$. Its total Pontryagin class is $P(p_i)=1$ and $c_1(L_j)=0$ for $j=1,2$. Thus, in the formula of Theorem \ref{EquiSign} the fixed point $p_i$ contributes the term
$$
\varepsilon_i\, {\frac{(1+z^{w_{i1}})(1+z^{w_{i2}})}{(1-z^{w_{i1}})(1-z^{w_{i2}})}}.
$$

Let $F_j$ be a fixed surface. Let $P(F_j)=1+x^2$ be the total Pontryagin class of $F_j$. Then
\begin{center}
$\displaystyle x \frac{1+e^{-x}}{1-e^{-x}}=x \frac{2 - x + \cdots}{x - \frac{x^2}{2} + \cdots} = \frac{2-x + \cdots}{1- \frac{x}{2} + \cdots}=(2-x + \cdots) \left(1+\frac{x}{2}+\cdots \right)=2+x-x+\cdots=2+ 0 \cdot x+ O(x^2).$
\end{center}

Next, let $L$ denote the normal bundle of $F_j$, $u \in H^2(F_j)$ a generator, and $c_1(L)=n_j u$. The weight $w_j$ of $F_j$ is 1. Then
$$
\frac{1+z^{w_j}e^{-c_1(L_j)}}{1-z^{w_j}e^{-c_1(L_j)}}=\frac{1+ze^{-n_ju}}{1-ze^{-n_ju}}=\frac{(1+z)+z(-n_ju)+\cdots}{(1-z)+zn_ju+\cdots}.
$$

Using $(1-z)+zn_ju+\cdots=(1-z)(1-(-\frac{zn_ju}{1-z})+\cdots)$, this is equal to
\begin{center}
$\displaystyle \frac{1}{1-z} \cdot \frac{(1+z)-zn_ju+\cdots}{1-(-\frac{zn_ju}{1-z})+\cdots}=\frac{1+z}{1-z}+\left(\frac{-zn_ju}{1-z}+\frac{1+z}{1-z}\cdot(-\frac{zn_ju}{1-z})\right)+\cdots=\frac{1+z}{1-z}+\frac{-2n_jz}{(1-z)^2}u +\cdots.$
\end{center}
Thus,
$$
\left\{ x \frac{1+e^{-x}}{1-e^{-x}} \cdot \frac{1+z e^{-c_1(L_j)}}{1-z e^{-c_1(L_j)}} \right\} [F_j]=\frac{-4zn_j}{(1-z)^2}.
$$
Therefore, the first equation follows. Taking $z=0$, the second equation holds. The last equation $\mathrm{sign}(M)=\frac{1}{3} p_1(M)$ follows by the Hirzebruch signature theorem.
\end{proof}

Note that the Atiyah-Hirzebruch formula above does not see the genus of a fixed surface. Also, note that the Atiyah-Hirzebruch formula is derived when a finite group of odd order acts on a compact oriented 4-manifold \cite[Proposition 6.18, p. 585]{AS} and \cite[p. 176]{HZ}. Using the above formula, we obtain the following formulas.

\begin{theorem} \label{3formulas}
Let $S^1$ act on a 4-dimensional compact oriented manifold $M$. The following equations hold.
\begin{enumerate}
\item \begin{center}
$\displaystyle L(M)=\sum_{i=1}^m \varepsilon_i \frac{w_{i1}^2+w_{i2}^2+1}{3w_{i1}w_{i2}}.$
\end{center}
\item \begin{center}
$\displaystyle 0=\sum_{i=1}^m \varepsilon_i \frac{1}{w_{i1}w_{i2}}+\sum_{j=1}^k (-n_j).$
\end{center}
\item \begin{center}
$\displaystyle 3L(M)=\sum_{i=1}^m \varepsilon_i \frac{w_{i1}^2+w_{i2}^2}{w_{i1}w_{i2}}+\sum_{j=1}^k n_j.$
\end{center}
\end{enumerate}
\end{theorem}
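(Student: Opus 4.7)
The plan is to derive formulas (1)--(3) from the Atiyah--Hirzebruch identity (\ref{ASformula}) by setting $z = e^t$ and expanding both sides as Laurent series in $t$ around $t = 0$. Since the right-hand side equals the constant $\mathrm{sign}(M)$ independently of $z$, every negative-power coefficient in $t$ must vanish and the constant coefficient must equal $\mathrm{sign}(M)$. Only the $t^{-2}$ and $t^{0}$ coefficients are needed, and together with the Hirzebruch signature theorem $\mathrm{sign}(M) = L(M)$ they will produce all three identities.

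The required local expansions are computed first. For an isolated fixed point with weights $w_1, w_2$, the identity $\frac{1 + e^{wt}}{1 - e^{wt}} = -\coth(wt/2)$ combined with $\coth x = \tfrac{1}{x} + \tfrac{x}{3} + O(x^3)$ gives
\begin{align*}
\frac{(1+z^{w_1})(1+z^{w_2})}{(1-z^{w_1})(1-z^{w_2})} = \frac{4}{w_1 w_2\, t^2} + \frac{w_1^2 + w_2^2}{3\, w_1 w_2} + O(t^2).
\end{align*}
For a fixed surface, a direct expansion of $e^t/(1-e^t)^2$ yields
\begin{align*}
\frac{4 z n_j}{(1-z)^2} = n_j \left( \frac{4}{t^2} - \frac{1}{3} \right) + O(t^2).
\end{align*}
Substituting these into (\ref{ASformula}) and collecting powers of $t$, vanishing of the $t^{-2}$ coefficient gives $\sum_{i=1}^m \varepsilon_i \frac{1}{w_{i1}w_{i2}} - \sum_{j=1}^k n_j = 0$, which is formula (2). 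Equating the constant coefficient to $\mathrm{sign}(M) = L(M)$ and multiplying by $3$ gives formula (3). Formula (1) then follows by using (2) to substitute $\sum_j n_j = \sum_i \varepsilon_i / (w_{i1} w_{i2})$ into (3), which merges the surface contribution into the fixed-point sum.

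The main obstacle is purely computational: carrying out the two Laurent expansions carefully, in particular confirming the constant term $-\tfrac{1}{3}$ of $4e^t/(1-e^t)^2$ and tracking the sign in $\frac{1+e^{wt}}{1-e^{wt}} = -\coth(wt/2)$. As a consistency check, both expansions contain only even powers of $t$, since $(1+z^w)/(1-z^w)$ and $z/(1-z)^2$ are each invariant (up to a cancelling sign in the former, which appears squared in the product) under $z \mapsto 1/z$; this ensures no $t^{-1}$ or $t^{1}$ coefficient obstructs the constancy. Once the expansions are in hand, everything else is straightforward algebraic rearrangement.
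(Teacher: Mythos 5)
Your proposal is correct and follows essentially the same route as the paper: expand the identity \eqref{ASformula} in a Laurent series about $z=1$ and compare the $t^{-2}$ and constant coefficients, the only difference being your local parameter $z=e^{t}$ (giving the clean $\coth$ and $\sinh^{-2}$ expansions with only even powers) versus the paper's $t=z-1$. This change of variable merely permutes which of the three formulas falls out directly from the constant term --- you obtain (2) and (3) first and deduce (1), while the paper obtains (1) and (2) first and deduces (3) --- and all of your intermediate expansions check out.
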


\begin{proof}
We consider Equation \eqref{ASformula}. Let $t=z-1$. First we show that 
$$
\frac{(1+z^a)(1+z^b)}{(1-z^a)(1-z^b)}=\frac{4}{ab}t^{-2} + \frac{4}{ab}t^{-1} +\frac{a^2+b^2+1}{3ab} +O(t),
$$
where $a$ and $b$ are positive integers. Indeed,
\begin{center}
$\displaystyle \frac{(1+z^a)(1+z^b)}{(1-z^a)(1-z^b)}=\frac{(1+(1+t)^a)(1+(1+t)^b)}{(1-(1+t)^a)(1-(1+t)^b)}=\frac{(2+at+\frac{a(a-1)}{2}t^2+O(t^3))(2+bt+\frac{b(b-1)}{2}t^2+O(t^3))}{abt^2(1+\frac{a-1}{2}t+\frac{(a-1)(a-2)}{6}t^2+O(t^3))(1+\frac{b-1}{2}t+\frac{(b-1)(b-2)}{6}t^2+O(t^3))}.$
\end{center}
Letting $-A=\frac{a-1}{2}t+\frac{(a-1)(a-2)}{6}t^2+O(t^3)$ and using the geometric series expansion $\frac{1}{1-A}=1+A+A^2+\cdots$,
\begin{center}
$\displaystyle \frac{2+at+\frac{a(a-1)}{2}t^2+O(t^3)}{1+\frac{a-1}{2}t+\frac{(a-1)(a-2)}{6}t^2+O(t^3)}$

$\displaystyle =\left(2+at+\frac{a(a-1)}{2}t^2+O(t^3)\right)(1+A+A^2+\cdots)$

$\displaystyle =2-(a-1)t+at-\frac{(a-1)(a-2)}{3}t^2+\frac{(a-1)^2}{2}t^2-\frac{a(a-1)}{2}t^2+\frac{a(a-1)}{2}t^2+O(t^3)$

$\displaystyle =2+t+\frac{a^2-1}{6}t^2+O(t^3).$
\end{center}
Therefore, 
\begin{center}
$\displaystyle \frac{(1+z^a)(1+z^b)}{(1-z^a)(1-z^b)}$

$\displaystyle =\frac{1}{abt^2}\left(2+t+\frac{a^2-1}{6}t^2+O(t^3) \right) \left(2+t+\frac{b^2-1}{6}t^2+O(t^3)\right)$

$\displaystyle =\frac{1}{abt^2}\left(4+4t+\frac{1}{3}(a^2+b^2+1)t^2+O(t^3)\right).$
\end{center}
Moreover,
$$\frac{z}{(1-z)^2}=\frac{1}{t^2}+\frac{1}{t}.$$

Therefore, comparing the constant terms in Equation \eqref{ASformula} proves the first equation. Moreover, comparing the coefficients of the $t^{-2}$-terms in Equation \eqref{ASformula}, we get
$$
0=\sum_{i=1}^m \varepsilon_i \frac{4}{w_{i1}w_{i2}}+\sum_{j=1}^k (-4n_j),
$$
which proves the second equation. The first and second equations imply the third equation.
\end{proof}

If there are only isolated fixed points, Theorem \ref{3formulas} implies the following formulas.

\begin{cor}
Let $S^1$ act on a 4-dimensional compact oriented manifold $M$. Suppose that there are only isolated fixed points. The following formulas hold.
\begin{enumerate}
\item \begin{center}
$\displaystyle L(M)=\sum_{i=1}^m \varepsilon_i \frac{w_{i1}^2+w_{i2}^2}{3w_{i1}w_{i2}}.$
\end{center}
\item \begin{center}
$\displaystyle 0=\sum_{i=1}^m \varepsilon_i \frac{1}{w_{i1}w_{i2}}.$
\end{center}
\end{enumerate}
\end{cor}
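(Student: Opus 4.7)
The plan is to specialize Theorem \ref{3formulas} to the case $k=0$, since the hypothesis "only isolated fixed points" simply means that the set of fixed surfaces $\{F_1,\dots,F_k\}$ is empty. So nothing new needs to be derived from the Atiyah--Hirzebruch formula; both identities should fall out after plugging $k=0$ into the three formulas of Theorem \ref{3formulas} and observing that one of them becomes algebraically redundant with the other.

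Concretely, I would first dispose of formula (2). Setting $k=0$ in the second equation of Theorem \ref{3formulas} immediately yields
$$
0 = \sum_{i=1}^m \varepsilon_i \frac{1}{w_{i1}w_{i2}},
$$
with the sum over fixed surfaces simply absent. This is the easier of the two identities and requires no further manipulation.

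For formula (1), I would start from the first equation of Theorem \ref{3formulas}, namely
$$
L(M) = \sum_{i=1}^m \varepsilon_i \frac{w_{i1}^2 + w_{i2}^2 + 1}{3 w_{i1} w_{i2}},
$$
and split the summand as $\frac{w_{i1}^2+w_{i2}^2}{3 w_{i1}w_{i2}} + \frac{1}{3 w_{i1}w_{i2}}$. The first piece is exactly the desired expression; the second piece is $\frac{1}{3}\sum_i \varepsilon_i \frac{1}{w_{i1}w_{i2}}$, which vanishes by formula (2) just proved. Alternatively, one could start from the third formula of Theorem \ref{3formulas} with $k=0$ and divide by $3$, obtaining the same conclusion.

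There is no real obstacle here; the only thing to be careful about is that formula (1) is a combined consequence of two of the identities in Theorem \ref{3formulas}, so one should either cite formula (2) of the corollary when simplifying the numerator $w_{i1}^2+w_{i2}^2+1$, or equivalently use the third identity of Theorem \ref{3formulas} directly. Either route is a one-line argument.
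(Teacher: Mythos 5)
Your proposal is correct and follows exactly the paper's own argument: specialize (2) of Theorem \ref{3formulas} to the case of no fixed surfaces to obtain the second formula, then combine it with (1) of Theorem \ref{3formulas} by splitting off the $\frac{1}{3w_{i1}w_{i2}}$ term to obtain the first. Nothing further is needed.
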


\begin{proof}
Since there are only isolated fixed points, (2) of Theorem \ref{3formulas} implies the second formula of this corollary. Together with (1) of Theorem \ref{3formulas}, this implies the first formula.
\end{proof}

Let $M$ be a $2n$-dimensional compact oriented $S^1$-manifold. For an isolated fixed point $p_i$, denote by $k_i$ the number of weights at $p_i$ that are equal to 1. If the fixed point set of $M$ consists of isolated points only, \cite[Lemma 1.1]{mus80} states that 
$$
\sum_i {\varepsilon_i\,k_i}=0;
$$
also see \cite{jan18}. Now let $M$ have dimension 4, and suppose $M$ has isolated fixed points $p_1$, $\cdots$, $p_m$ and fixed surfaces $F_1$, $\cdots$, $F_k$, Then $k_i$ can be 0, 1, or 2. The below lemma, which is an extension in dimension 4 of the above fact, can be derived from Equation \eqref{ASformula} by the same way as \cite[Lemma 1.1]{mus80}, as we prove below.

\begin{lemma}\label{weight12}
Let $M$ be a 4-dimensional compact oriented $S^1$-manifold with isolated fixed points $p_1,\cdots,p_m$ and fixed surfaces $F_1,\cdots,F_k$. Then
\begin{equation} \label{weight1'}
\sum_{i=1}^m {\varepsilon_i\,k_i}-2\sum_{j=1}^k{n_j}=0,
\end{equation}
where $k_i$ is the multiplicity of weight 1 at $p_i$.
\end{lemma}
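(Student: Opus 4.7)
The plan is to extract the identity from Equation \eqref{ASformula} by comparing the coefficient of $z^1$ in a formal power-series expansion about $z=0$. Since the left-hand side $\mathrm{sign}(M)$ is a constant, the coefficient of $z^n$ on the right must vanish for every $n \geq 1$; the case $n=1$ is precisely what will encode the multiplicity of weight $1$. This is the same strategy as in the proof of \cite[Lemma 1.1]{mus80}, but now the right-hand side has an extra sum over fixed surfaces that has to be accounted for.

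For an isolated fixed point $p_i$, the starting observation is the geometric expansion
$$\frac{1+z^w}{1-z^w} = 1 + 2\sum_{n \geq 1} z^{nw},$$
so this factor is $1 + 2z + O(z^2)$ when $w = 1$ and $1 + O(z^2)$ when $w \geq 2$. Multiplying the two factors corresponding to the weights $w_{i1}, w_{i2}$ at $p_i$, the constant term is $1$ and the coefficient of $z$ equals $2k_i$, as can be verified directly in the three cases $k_i = 0, 1, 2$. Thus the fixed-point contribution to the coefficient of $z$ on the right-hand side of \eqref{ASformula} is $\sum_i 2\varepsilon_i k_i$.

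For a fixed surface $F_j$, I would expand
$$\frac{4zn_j}{(1-z)^2} = 4n_j \sum_{n \geq 1} n \, z^n,$$
whose coefficient of $z$ is $4n_j$. Setting the total coefficient of $z$ on the right-hand side of \eqref{ASformula} equal to $0$ then gives
$$0 = 2\sum_{i=1}^m \varepsilon_i k_i - 4\sum_{j=1}^k n_j,$$
which rearranges to \eqref{weight1'} after dividing by $2$. There is no substantive obstacle beyond checking the $2k_i$ coefficient in the three cases; the surface contribution is handled by the same elementary expansion.
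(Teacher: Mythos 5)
Your proposal is correct and is essentially the paper's own argument: extracting the coefficient of $z$ in the expansion of Equation \eqref{ASformula} about $z=0$ is the same as the paper's step of differentiating at $z=0$, and your case check giving $2k_i$ for the point contributions and $4n_j$ for the surface contributions matches the paper's computation exactly.
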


\begin{proof}
By Theorem \ref{AHformula}, the signature of $M$ satisfies
$$
\mathrm{sign}(M) =\sum\limits_{i=1}^m\varepsilon_i\, {\frac{(z^{w_{i1}}+1)(z^{w_{i2}}+1)}{(z^{w_{i1}}-1)(z^{w_{i2}}-1)}}-\sum\limits_{j=1}^k\frac{4zn_j}{(1-z)^2}.
$$
This formula holds for all indeterminate $z$, and is a constant.

For an isolated fixed point $p_i$,
\begin{center}
$\displaystyle \left. \frac{d}{dz}  \left( \frac{(z^{w_{i1}}+1)(z^{w_{i2}}+1)}{(z^{w_{i1}}-1)(z^{w_{i2}}-1)} \right)\right|_{z=0}$

$\displaystyle =
\begin{cases} 
0 & \mbox{if none of } w_{i1} \mbox{ and } w_{i2} \mbox{ are equal to 1} \\
2 & \mbox{if exactly one of } w_{i1} \mbox{ and } w_{i2} \mbox{ is equal to 1} \\
4 & \mbox{if both } w_{i1} \mbox{ and } w_{i2} \mbox{ are equal to 1}
\end{cases}$
\end{center}

For a fixed surface $F_j$,
$$ \left. \frac{d}{dz}  \left(  \frac{4zn_j}{(1-z)^2}  \right) \right|_{z=0}=4n_j.$$

Therefore, taking the derivative of the above formula and evaluating at $z=0$, the lemma follows.
\end{proof}

Lemma \ref{weight12} enables us to associate a graph to a 4-dimensional compact oriented $S^1$-manifold, as we do so in the next section.

\section{Graph} \label{s4}

\subsection{Virtual graph of weights}

To a 4-dimensional compact oriented $S^1$-manifold, we shall associate a graph that contains information on data on the fixed point set. For this, we introduce terminologies.

\begin{defn} \label{defngraph}
A (4-dimensional virtual) \textbf{graph of weights} is a graph defined as follows.
\begin{enumerate}
\item Its vertex set consists of two types of vertices, called \textbf{points} $p_1$, $\cdots$, $p_m$ and \textbf{surfaces} $F_1$, $\cdots$, $F_k$.
\item To each point $p_i$ is associated sign $+1$ or $-1$, denoted $\varepsilon_i$.
\item To each surface $F_j$ is associated an integer $n_j$.
\item Each point $p$ has two edges, and the edges are labeled by positive integers that are relatively prime, called \textbf{weights} of $p$.
\item Each surface has $2|n_j|$ edges, each of which has label 1.
\end{enumerate}
\end{defn}

\begin{defn} \label{graphofweights}
Let $M$ be a 4-dimensional compact oriented $S^1$-manifold. We say that $G_W$ is a \textbf{graph of weights of} $M$ if the following hold.
\begin{enumerate}
\item The vertices of $G_W$ are the fixed components of $M$.
\item The sign of a point in $G_W$ is the sign of the corresponding isolated fixed point.
\item The labels of edges of a point of $G_W$ are the weights of the corresponding isolated fixed point.
\item For a surface $F_j$ in $G_W$, the number $n_j$ is the Euler number of the normal bundle of the corresponding fixed surface.
\item For a fixed surface $F_j$ in $M$, the corresponding surface of $G_W$ has $2|n_j|$ edges, all of label 1.
\item If two vertices are connected by an edge whose label $w$ is bigger than 1, then the corresponding fixed points are in the same component of $M^{\mathbb{Z}_w}$, which is an invariant 2-sphere with weight $w$. 
\end{enumerate}
\end{defn}

With the terminologies, we show that for a 4-dimensional compact oriented $S^1$-manifold, we can encode its fixed point data in a graph.

\begin{theorem} \label{graph}
Let $M$ be a 4-dimensional compact oriented $S^1$-manifold. Then a graph $G_W$ of weights for $M$ exists. Moreover, there is one that has no self-loops with the following property: if there is an edge of label 1 between two vertices, then the corresponding fixed components contribute the terms with different signs in Equation \eqref{weight1'} of Lemma \ref{weight12}.
\end{theorem}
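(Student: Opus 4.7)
The plan is to build the graph in two stages: first use the invariant 2-spheres from Lemma \ref{isotropysphere} to place all edges of label $w \geq 2$ canonically, then combinatorially match the remaining label-1 stubs so that neither self-loops nor sign violations occur.

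In the first stage, for each isolated fixed point $p_i$ and each weight $w \geq 2$ at $p_i$, Lemma \ref{isotropysphere} produces a unique invariant 2-sphere, namely the component of $M^{\mathbb{Z}_w}$ through $p_i$, that contains $p_i$ and exactly one other fixed point $q$ which also carries weight $w$. I create an edge of label $w$ between $p_i$ and $q$. Since the action is effective, the two weights at $p_i$ are coprime, so at most one of them equals any given $w \geq 2$; consequently each weight-$w$ stub at $p_i$ is paired with a weight-$w$ stub at a distinct vertex, and no self-loop appears. This stage also automatically enforces clause (6) of Definition \ref{graphofweights}.

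In the second stage I handle the remaining label-1 stubs: $k_i$ of them at each $p_i$ and $2|n_j|$ of them at each fixed surface $F_j$. Assign each stub a sign equal to its per-stub contribution to Equation \eqref{weight1'}: stubs at $p_i$ get sign $\varepsilon_i$, and stubs at $F_j$ get sign $-\mathrm{sign}(n_j)$. The key observation is that all label-1 stubs at any \emph{single} vertex share one sign. A direct rearrangement of Lemma \ref{weight12} shows that the total number of positive stubs equals the total number of negative stubs. I then choose any bijection between these two classes and let each matched pair form a label-1 edge. Because no vertex carries stubs of both signs, the bijection never pairs two stubs at the same vertex, so no self-loop arises; by construction the endpoints of every label-1 edge contribute opposite signs to Equation \eqref{weight1'}.

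The remaining verification is bookkeeping: clauses (1)--(5) of Definition \ref{graphofweights} are built into the construction, clause (6) comes from the use of $M^{\mathbb{Z}_w}$ in stage one, and the no-self-loop condition has been checked in each stage. I expect the main obstacle to be the avoidance of self-loops on label-1 edges; this is precisely what forces the use of Lemma \ref{weight12} in its signed form together with the ``uniform sign per vertex'' observation.
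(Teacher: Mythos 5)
Your proposal is correct and follows essentially the same route as the paper: edges of label $w\geq 2$ are produced from the invariant $2$-spheres of Lemma \ref{isotropysphere}, and the label-$1$ stubs are matched across the two sides of the rearranged identity of Lemma \ref{weight12} (the paper's Equation \eqref{weight1re}), which is exactly your ``equal numbers of positive and negative stubs'' observation. The only cosmetic difference is that you phrase the matching as a signed bijection on stubs while the paper phrases it as moving each component's contribution to one side of the equation with a positive coefficient; these are the same argument.
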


\begin{proof}
Let $p_1$, $\cdots$, $p_m$ denote isolated fixed points and let $F_1$, $\cdots$, $F_k$ be fixed surfaces. To each $p_i$ we assign a vertex, also denoted by $p_i$, which is a point; the sign of the vertex $p_i$ is the sign $\varepsilon_i$ of the corresponding fixed point $p_i$. To each $F_j$ we assign a vertex, also denoted $F_j$, which is a surface.

Suppose that a fixed point $p_i$ has weight $w$ that is bigger than 1. 
By Lemma \ref{isotropysphere}, $p_i$ lies in a 2-sphere, which is a component of $M^{\mathbb{Z}_w}$ and contains another fixed point $p_l$ that has a weight $w$.
We draw an edge between $p_i$ and $p_l$ and give the edge a label $w$. 

Next, we draw edges for weight 1. We rewrite Equation \eqref{weight1'} as
\begin{equation} \label{weight1re}
\displaystyle \sum_{\varepsilon_i >0}  k_i - 2 \sum_{n_j <0} n_j=\sum_{\varepsilon_i <0}  k_i + 2 \sum_{n_j >0} n_j,
\end{equation}
so that each fixed component contributes Equation \eqref{weight1re} in one side of the equation with a positive coefficient. If $\varepsilon_i>0$ ($\varepsilon_i<0$), then $p_i$ contributes to Equation \eqref{weight1re} in the left side (right side) by $k_i$, which is positive. Similarly, if $n_j<0$ ($n_j>0$) then $F_j$ contributes to Equation \eqref{weight1re} in the left side (right side) by $-2n_j$ ($2n_j$), which is positive. Now, to each isolated fixed point $p_i$ we assign $k_i$ edges of label 1 and to each fixed surface $F_j$, we assign $2n_j$ edges of label 1, so that an edge of label 1 has vertices (fixed components) $v$ and $v'$ whose contributions as fixed components in Equation \eqref{weight1re} are in the opposite sides.
\end{proof}

We illustrate a graph of weights with an example.

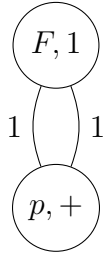
\begin{figure}
\centering
\begin{tikzpicture}[state/.style ={circle, draw}]
\node[state] (a) {$p,+$};
\node[state] (b) [above=of a] {$F,1$};
\path (a) [bend right =20]edge node[right] {$1$} (b);
\path (b) [bend right =20]edge node[left] {$1$} (a);
\end{tikzpicture}
\caption{Graph of weights for Example \ref{P(1,0)}}\label{figCP}
\end{figure}

\begin{exa} \label{P(1,0)}
Let $S^1$ act on the complex projective space $\mathbb{CP}^2$ by
$$g \cdot [z_0:z_1:z_2]=[g z_0:z_1:z_2]$$
for all $g \in S^1 \subset \mathbb{C}$ and $[z_0:z_1:z_2] \in \mathbb{CP}^2$. There are two fixed components, $p=[1:0:0]$ and $F=[0:z_1:z_2]$. The complex weights at $p$ are $-1,-1$ and thus $\varepsilon_p=+1$ and $w_{p1}=w_{p2}=1$. 

The Euler number of the normal bundle of $F$ is 1. Then Figure \ref{figCP} is a graph of weights for $\mathbb{CP}^2$ with the action. The fixed point $p$ has weights $\{1,1\}$ and sign $+1$ and so a corresponding vertex has two edges of label 1 and has sign $+$. The fixed surface $F$ has Euler number 1 for its normal bundle, and so its corresponding vertex is assigned a number 1 and has 2 edges of label 1. In this case, the Atiyah-Hirzebruch formula (Theorem \ref{AHformula}) is
$$
\mathrm{sign}(\mathbb{CP}^2)={\frac{(z+1)(z+1)}{(z-1)(z-1)}}-\frac{4z}{(z-1)^2}=\frac{z^2-2z+1}{(z-1)^2}=1. 
$$

There are two 2-spheres $F_1=[z_0:z_1:0]$ and $F_2=[z_0:0:z_2]$ connecting the fixed components $p$ and $F$, which are both invariant 2-spheres with weight 1.
\end{exa}

\begin{exa} \label{S(1,0)}
Let $S^4=\{(z_1,z_2,x) \in \mathbb{C} \times \mathbb{C} \times \mathbb{R} : |z_1|^2+|z_2|^2+x^2=1\}$ be the 4-sphere. Let $S^1$ act on the 4-sphere $S^4$ by
\begin{center}
$g \cdot (z_1,z_2,x)=(g z_1, z_2,x)$
\end{center}
for all $g \in S^1 \subset \mathbb{C}$ and for all $(z_1,z_2,x) \in S^4$. The action has one fixed component $F:=\{(0,z_2,x) \in S^4\}$, which is a 2-sphere. The Euler number of the normal bundle of $F$ is 0. Thus, a graph of weights for this action on $S^4$ consists of one vertex that is a surface, with 0 assigned to the vertex.
\end{exa}

\begin{remark} \label{non-uniqueness}
A graph of weights does not determine an $S^1$-manifold uniquely. Here is an example. Consider the $S^1$-action on $\mathbb{CP}^2$ in Example \ref{P(1,0)} and its graph of weights Figure \ref{figCP}. Let $N$ be any 4-dimensional compact connected oriented $S^1$-manifold that has no fixed points. Take an equivariant connected sum along free orbits of Example \ref{P(1,0)} and $N$; let $M$ denote the connected sum. Then Figure \ref{figCP} is also a graph of weights for $M$, but Example \ref{P(1,0)} and $M$ are in general not (equivariantly) diffeomorphic.
\end{remark}

\subsection{Euler number of an edge}

Given a graph of weights, for an edge we shall define a number and call it the Euler number of the edge. If the graph is of a 4-dimensional compact oriented $S^1$-manifold, for an edge of label bigger than 1, we shall see that the Euler number of an edge is exactly the Euler number of the normal bundle of the invariant 2-sphere corresponding to the edge.

\begin{defn} \label{OtherWeight} Let $G_W$ be a graph of weights. Let $e$ be an edge with label $w_e$, and let $v$ be a vertex of $e$. We define a number, called the \textbf{other weight} of $w_e$ at $v$, as follows.
\begin{enumerate}
\item If $v$ is a point, then the other weight of $w_e$ at $v$ is the label of the other edge of $v$.
\item If $v$ is a surface, then the other weight of $v$ is 0.
\end{enumerate}
\end{defn}

\begin{defn} \label{EulNum}
Let $G_W$ be a graph of weights. Let $e$ be an edge with label $w_e$. Let $v$ and $v'$ be the vertices of $e$, and let $w$ and $w'$ be the other weight of $w_e$ at $v$ and $v'$, respectively. We define a rational number, denoted $n_e$ and called the \textbf{Euler number of $e$}, by
\begin{center}
$\displaystyle n_e:=\frac{\varepsilon_v w+\varepsilon_{v'} w'}{w_e}.$
\end{center}
Here, if $v$ is a surface, we define $\varepsilon_v$ to be 1 and similarly for $v'$.
\end{defn}

\begin{defn}
We say that a graph of weights is \textbf{admissible} if the Euler number of each edge is an integer.
\end{defn}

For an integer $n$, we consider a complex line bundle $\mathcal{O}(n)$ over $\mathbb{CP}^1$ with Euler number $n$, the quotient of $(\mathbb{C}^2-\{0\}) \times \mathbb{C}$ by a $C^*$-action

$$z \cdot (z_1,z_2,w)=(zz_1,zz_2,z^n w).$$

Let $S^1$ act on $\mathcal{O}(n)$ by

$$g \cdot [z_1,z_2,w]=[z_1,g^{u_1}z_2,g^{u_2}w]$$
for some integers $u_1$ and $u_2$. 

Suppose that $u_1 \neq 0$. Then this action has two fixed points $q_1=[1,0,0]$ and $q_2=[0,1,0]$ that have (complex) weights $\{u_1,u_2\}$ and $\{-u_1,-nu_1+u_2\}$. If we let $u_1=w_e$, $u_2=\epsilon_i w_{i2}$, $-nu_1+u_2=-\epsilon_k w_{k2}$, then we have
$$n=\frac{\epsilon_i w_{i2}+\epsilon_k w_{k2}}{w_e}.$$

\begin{theorem} \label{n_e}
Let $M$ be a 4-dimensional compact oriented $S^1$-manifold. Let $G_W$ be its graph of weights. Let $e$ be an edge whose weight is bigger than 1. Then the Euler number $n_e$ of the edge is equal to the Euler number of the normal bundle of the invariant 2-sphere of weight $w_e$, containing two fixed points that correspond to the vertices of $e$.
\end{theorem}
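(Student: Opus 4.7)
The plan is to reduce the general situation to the model computation for $\mathcal{O}(n)$ sketched in the excerpt immediately before the theorem. First, by Lemma \ref{isotropysphere} the edge $e$ corresponds to an invariant 2-sphere $S$ that is the component of $M^{\mathbb{Z}_{w_e}}$ containing the two vertices $p_i$ and $p_k$ of $e$. At each of these isolated fixed points, the $\mathbb{Z}_{w_e}$-fixed subspace of the tangent space is exactly the weight-$w_e$ irreducible summand of $TM$, so $T_{p_i}M = T_{p_i}S \oplus N_{p_i}S$ with $S^1$-weight $w_e$ on $T_{p_i}S$ and $w_{i2}$ on $N_{p_i}S$, and analogously at $p_k$.

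Second, an $S^1$-invariant fiber metric on $NS$ produces a canonical $S^1$-invariant almost complex structure, since $S^1$ acts on each fiber of $NS$ by a non-trivial rotation. Thus $NS$ is an $S^1$-equivariant complex line bundle over $S \cong \mathbb{CP}^1$. Such bundles over $\mathbb{CP}^1$ are classified up to equivariant isomorphism by their degree together with the weight at one fixed point, so $NS$ with its $S^1$-action is equivariantly isomorphic to some $\mathcal{O}(n)$ equipped with an action $g \cdot [z_1, z_2, w] = [z_1, g^{u_1} z_2, g^{u_2} w]$ as in the excerpt, for suitable integers $u_1$ and $u_2$. By the equivariant tubular neighborhood theorem, a neighborhood of $S$ in $M$ is $S^1$-equivariantly diffeomorphic to this model, and the integer $n$ coincides with the Euler number of $NS$ as an oriented real $2$-plane bundle, once orientations are reconciled.

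Third, matching the fixed-point data at $p_i$ and $p_k$ exactly as carried out in the excerpt gives $u_1 = w_e$, $u_2 = \varepsilon_i w_{i2}$, and $u_2 - n u_1 = -\varepsilon_k w_{k2}$, whence
$$n = \frac{\varepsilon_i w_{i2} + \varepsilon_k w_{k2}}{w_e} = n_e,$$
which is the desired identity.

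The main obstacle will be the orientation bookkeeping that underlies the sign-matching in the previous paragraph. One must verify that the $M$-orientation on a tubular neighborhood of $S$ corresponds to the natural (complex) orientation of $\mathcal{O}(n)$ used in the excerpt's derivation of $\varepsilon_{q_1}$ and $\varepsilon_{q_2}$, so that the signs $\varepsilon_i, \varepsilon_k$ defined via $M$ agree with the corresponding signs computed in the model. The key input is the standard fact that, for any $S^1$-action on $S^2$ with two fixed poles, the positive-weight orientations of the two tangent spaces to $S^2$ are oppositely related to any fixed global orientation of $S^2$; this is precisely the phenomenon encoded by the opposite complex tangent weights $u_1$ and $-u_1$ at $q_1$ and $q_2$ in the model. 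Once this is spelled out carefully, the theorem reduces to the sign-matching calculation already displayed in the excerpt.
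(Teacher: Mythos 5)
Your proposal is correct and follows essentially the same route as the paper: identify the invariant sphere via the $\mathbb{Z}_{w_e}$-fixed component, model an equivariant neighborhood by $\mathcal{O}(n)$ with the given $S^1$-action, and match the normal weights $u_2=\varepsilon_i w_{i2}$ and $-nu_1+u_2=-\varepsilon_k w_{k2}$ to solve for $n$. The orientation bookkeeping you flag is exactly what the paper handles by choosing the orientation of $S_e$ so that the tangent weights at the two poles are $w_{i1}$ and $-w_{k1}$.
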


\begin{proof}
Let $p_i$ and $p_k$ be the vertices (fixed points) of $e$ and let $w=w_{i1}=w_{k1}$ be the weight of the edge $e$. By (1) of Theorem \ref{graph}, $p_i$ and $p_k$ are in an invariant 2-sphere $S_e$ of weight $w$.
Choose an orientation of $S_e$, so that $S^1$ acts on $T_{p_i}S_e$ with weight $w_{i1}$ and on $T_{p_k}S_e$ with weight $-w_{k1}$. Then $S^1$ acts on $N_{p_i}S_e$ with weight $\epsilon_i w_{i2}$ and on $N_{p_k}S_e$ with weight $-\epsilon_k w_{k2}$.

The normal bundle of $S_e$ is orientation preserving equivariantly diffeomorphic to $\mathcal{O}(n)$ for some integer $n$ with $u_1=w_{i1}$ and $u_2=\epsilon_i w_{i2}$. Thus, $-\epsilon_k w_{k2}$ is equal to $-n u_1+u_2=-n w_{i1}+\epsilon_i w_{i2}$, that is, $-\epsilon_k w_{k2}=-n w_{i1}+\epsilon_i w_{i2}$. Therefore,  the Euler number $n$ of the normal bundle of $S_e$ is $n=\frac{\varepsilon_i w_{i2}+\varepsilon_k w_{k2}}{w_{i1}}$, which is the Euler number $n_e$ of the edge $e$. \end{proof}

Any graph of weights of a 4-dimensional compact oriented $S^1$-manifold is admissible.

\begin{prop} \label{admi}
Let $M$ be a 4-dimensional compact oriented $S^1$-manifold and let $G_W$ be its graph of weights. Then $G_W$ is admissible.
\end{prop}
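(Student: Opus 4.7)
The plan is to prove Proposition \ref{admi} by a case split on the label $w_e$ of each edge $e$ of the graph $G_W$. Since admissibility is a statement about individual edges, it suffices to show that $n_e \in \mathbb{Z}$ for every edge, and almost all of the work has already been done in Theorem \ref{n_e}.

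First I would handle the easy case $w_e = 1$. In that case the definition of the Euler number of an edge reduces to $n_e = \varepsilon_v w + \varepsilon_{v'} w'$, where $w, w'$ are the other weights of $w_e$ at the two endpoints $v, v'$. Since $\varepsilon_v, \varepsilon_{v'} \in \{\pm 1\}$ by convention (including the convention that $\varepsilon = 1$ on a surface vertex) and since $w, w'$ are non-negative integers (a positive integer when the endpoint is a point, and $0$ when the endpoint is a surface), this expression is automatically an integer with no further input needed.

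Next I would handle the case $w_e > 1$. By Definition \ref{defngraph}(5), surface vertices carry only edges of label $1$, so both endpoints of $e$ must be points; call them $p_i$ and $p_k$, with weights of the edge $w_{i1} = w_{k1} = w_e$ and other weights $w_{i2}, w_{k2}$. Theorem \ref{n_e} then directly identifies $n_e$ with the Euler number of the normal bundle of the invariant $2$-sphere (the component of $M^{\mathbb{Z}_{w_e}}$ guaranteed by Lemma \ref{isotropysphere}) whose fixed points are $p_i$ and $p_k$. Since the Euler number of the normal bundle of an oriented closed submanifold of an oriented manifold is an integer, $n_e \in \mathbb{Z}$.

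There is no real obstacle here; the proposition is a direct corollary of Theorem \ref{n_e} together with the trivial observation that dividing an integer by $1$ yields an integer. The only thing to be careful about is bookkeeping in the weight-$1$ case, where an endpoint may be a surface and so one has to invoke the convention in Definition \ref{EulNum} that $\varepsilon_v = 1$ and the corresponding other weight is $0$; both specializations keep the numerator integral, so admissibility follows.
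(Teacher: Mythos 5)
Your proof is correct and follows essentially the same route as the paper: the case $w_e=1$ is handled by noting the numerator is already an integer, and the case $w_e>1$ is reduced to Theorem \ref{n_e}, which identifies $n_e$ with the (integral) Euler number of the normal bundle of the invariant $2$-sphere. The extra bookkeeping you supply (that surface endpoints only occur for label-$1$ edges, and the $\varepsilon_v=1$, other weight $=0$ conventions) is consistent with the paper's definitions and does no harm.
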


\begin{proof}
Let $e$ be an edge of $G_W$. Let $v$ and $v'$ be the vertices of $e$, and let $w$ ($w'$) be the other weight of $v$ ($v'$).

If its label $w_e$ is 1, then its Euler number $n_e=\frac{\varepsilon_v w+\varepsilon_{v'} w'}{w_e}=\varepsilon_v w+\varepsilon_{v'} w'$ is an integer. 

If its label $w_e$ is bigger than 1, then Theorem \ref{n_e} implies that its Euler number $n_e$ is an integer. 
\end{proof}

With the above theorem, we give another description of the $L$-genus of a 4-dimensional oriented $S^1$-manifold in terms of $n_e$ and $n_j$.

\begin{theorem}
Let $M$ be a 4-dimensional compact oriented $S^1$-manifold and $G_W$ its graph of weights. Then
$$
3 \, L(M)=\sum\limits_{e\in E(G_W)} {n_e}+\sum\limits_{j=1}^k{n_j},
$$
where $E(G_W)$ denotes the set of edges of $G_W$.
\end{theorem}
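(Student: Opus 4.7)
The plan is to deduce this directly from part (3) of Theorem \ref{3formulas}, namely
$$3L(M)=\sum_{i=1}^m \varepsilon_i \frac{w_{i1}^2+w_{i2}^2}{w_{i1}w_{i2}}+\sum_{j=1}^k n_j,$$
by showing that the sum over isolated fixed points equals the sum over edges of $n_e$. So the goal reduces to the combinatorial identity
$$\sum_{e\in E(G_W)} n_e=\sum_{i=1}^m \varepsilon_i\,\frac{w_{i1}^2+w_{i2}^2}{w_{i1}w_{i2}},$$
and then the theorem follows by substitution.

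To prove this identity, I will re-index $\sum_e n_e$ as a sum over incidences (pairs consisting of an edge together with one of its endpoints). By definition
$$n_e=\frac{\varepsilon_v w+\varepsilon_{v'} w'}{w_e},$$
where $v,v'$ are the endpoints of $e$ with other weights $w,w'$ at $v,v'$ respectively, so
$$\sum_{e\in E(G_W)} n_e=\sum_{\substack{e\in E(G_W)\\ v\text{ endpoint of }e}} \frac{\varepsilon_v\cdot(\text{other weight of }w_e\text{ at }v)}{w_e}.$$
Now I would collect the contributions vertex by vertex. An isolated fixed point $p_i$ is an endpoint of exactly two edges (by Definition \ref{defngraph}(4)), labeled $w_{i1}$ and $w_{i2}$; at the edge labeled $w_{i1}$ the other weight at $p_i$ is $w_{i2}$, and vice versa. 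Hence $p_i$ contributes
$$\varepsilon_i\left(\frac{w_{i2}}{w_{i1}}+\frac{w_{i1}}{w_{i2}}\right)=\varepsilon_i\,\frac{w_{i1}^2+w_{i2}^2}{w_{i1}w_{i2}}.$$
A fixed surface $F_j$ is an endpoint of $2|n_j|$ edges, but at each such incidence the other weight at $F_j$ is $0$ by Definition \ref{OtherWeight}(2), so $F_j$ contributes $0$. Summing over all vertices yields the desired identity.

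There is essentially no obstacle: the proof is just a re-indexing, the only subtlety being the convention that surfaces carry other weight $0$ and $\varepsilon=1$, which makes their incidence contributions vanish and guarantees that only the point--point and point--surface edges actually carry nonzero weighted contributions, while the global $\sum_j n_j$ term is already produced separately by Theorem \ref{3formulas}(3). Substituting the identity into that formula finishes the proof.
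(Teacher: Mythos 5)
Your proof is correct and follows essentially the same route as the paper: both start from part (3) of Theorem \ref{3formulas} and rewrite $\sum_i \varepsilon_i\bigl(\tfrac{w_{i1}}{w_{i2}}+\tfrac{w_{i2}}{w_{i1}}\bigr)$ as $\sum_{e} n_e$ by a double count, the paper grouping the incidence contributions by edge (with a case analysis on the endpoint types) and you grouping them by vertex, which is the same rearrangement read in the other direction.
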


\begin{proof}
By the last equation of Theorem \ref{3formulas},
$$
3L(M)=\sum_{i=1}^m \varepsilon_i \left(\frac{w_{i1}}{w_{i2}}+\frac{w_{i2}}{w_{i1}} \right)+\sum_{j=1}^k n_j.
$$

Let $e$ be an edge of $G_W$. Let $w_e$ denote its label.

First, suppose that the edge is between two isolated fixed points $p_i$ and $p_k$. Let $w$ and $w'$ be the other weight at $p_i$ and $p_k$, respectively. Then $\varepsilon_i \frac{w}{w_e}$ is one of the two terms $\varepsilon_i(\frac{w_{i1}}{w_{i2}}+\frac{w_{i2}}{w_{i1}})$ and similarly $\varepsilon_k \frac{w'}{w_e}$ is one of the two terms $\varepsilon_k(\frac{w_{k1}}{w_{k2}}+\frac{w_{k2}}{w_{k1}})$ in (3) of Theorem \ref{3formulas}. By Definition \ref{EulNum}, their sum $\frac{\varepsilon_v w+\varepsilon_{v'} w'}{w_e}$ is the Euler number $n_e$ of the edge $e$.

Second, suppose that the edge is between an isolated fixed point $p_i$ and a fixed surface $F_k$. Let $w$ be the other weight at $p_i$. The term $\varepsilon_i \frac{w}{w_e}$ is one of the two terms $\varepsilon_i(\frac{w_{i1}}{w_{i2}}+\frac{w_{i2}}{w_{i1}})$ in (3) of Theorem \ref{3formulas}. The weight of the tangent bundle of $F_k$ is zero, and this agrees with the other weight $w'$ of $F_k$ as a vertex of $G_W$. By Definition \ref{EulNum}, the sum $\frac{\varepsilon_v w+\varepsilon_{v'} w'}{w_e}=\frac{\varepsilon_v w}{w_e}$ is the Euler number $n_e$ of the edge $e$.

Third, suppose that the edge is between two fixed surfaces $F_i$ and $F_k$. The weight of the tangent bundle of $F_i$ ($F_k$) is zero, which is the other weight $w$ ($w'$) of the vertex $F_i$ ($F_k$, respectively). Thus $0=\frac{\varepsilon_v w+\varepsilon_{v'} w'}{w_e}$ is the Euler number $n_e$ of the edge $e$.

To sum up, by rearranging the terms $\varepsilon_i (\frac{w_{i1}}{w_{i2}}+\frac{w_{i2}}{w_{i1}} )$ for the edges, we get
$$
\sum_{i=1}^m \varepsilon_i \left(\frac{w_{i1}}{w_{i2}}+\frac{w_{i2}}{w_{i1}} \right)=\sum\limits_{e\in E(G)} \frac{\varepsilon_iw_{i2}+\varepsilon_kw_{k2}}{w_e}=\sum\limits_{e\in E(G)} {n_e}.
$$
This proves the theorem. \end{proof}

Let $M$ be a 4-dimensional compact oriented manifold and let $G_W$ be its graph of weights. By Theorem \ref{n_e}, every edge with label bigger than 1 corresponds to an invariant 2-sphere. It is a natural question to ask, if there exists a graph of weights, in which an edge with label 1 also corresponds to an invariant 2-sphere.

\begin{que} \label{quesphere}
Let $M$ be a 4-dimensional compact oriented $S^1$-manifold. Does there exists a graph $G_W$ of weights, such that any edge $e$ of label $1$ between vertices $p$ and $q$ corresponds to an invariant 2-sphere with weight $1$ between fixed components $p$ and $q$?
\end{que}

For instance, the answer to Question \ref{quesphere} is affirmative in Example \ref{P(1,0)}.

\subsection{Graphs for basic manifolds}

We illustrate graphs of weights for basic 4-dimensional oriented $S^1$-manifolds.

\begin{figure}
\centering
\begin{subfigure}[b][5.5cm][s]{.17\textwidth}
\centering
\vfill
\begin{tikzpicture}[state/.style ={circle, draw}]
\node[state] (a) {$p_1,+$};
\node[state] (b) [above=of a] {$p_2,-$};
\path (a) [bend right =20]edge node[right] {$b$} (b);
\path (b) [bend right =20]edge node[left] {$a$} (a);
\end{tikzpicture}
\vfill
\caption{$S(a,b)$}\label{fig1-1}
\vspace{\baselineskip}
\end{subfigure}\qquad
\begin{subfigure}[b][5.5cm][s]{.3\textwidth}
\centering
\vfill
\begin{tikzpicture}[state/.style ={circle, draw}]
\node[state] (a) {$p_1,+$};
\node[state] (b) [above right=of a] {$p_2,-$};
\node[state] (c) [above left=of b] {$p_3,+$};
\path (a) edge node[right] {$a$} (b);
\path (b) edge node [right] {$b-a$} (c);
\path (a) edge node [left] {$b$} (c);
\end{tikzpicture}
\vfill
\caption{$P(a,b)$}\label{fig1-2}
\vspace{\baselineskip}
\end{subfigure}\qquad
\begin{subfigure}[b][5.5cm][s]{.3\textwidth}
\centering
\vfill
\begin{tikzpicture}[state/.style ={circle, draw}]
\node[state] (a) {$p_1,-$};
\node[state] (b) [above right=of a] {$p_2,+$};
\node[state] (c) [above left=of b] {$p_3,-$};
\path (a) edge node[right] {$a$} (b);
\path (b) edge node [right] {$b-a$} (c);
\path (a) edge node [left] {$b$} (c);
\end{tikzpicture}
\vfill
\caption{$Q(a,b)$}\label{fig1-3}
\vspace{\baselineskip}
\end{subfigure}\qquad
\caption{Graphs for $S(a,b)$, $P(a,b)$, $Q(a,b)$}\label{fig1}
\end{figure}
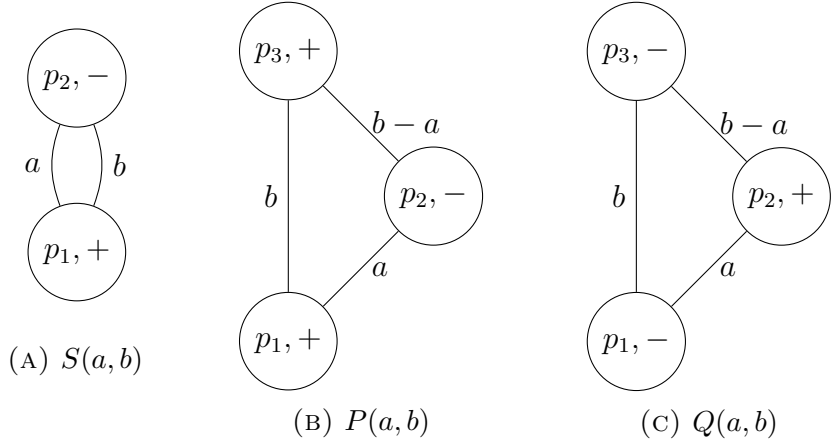

\begin{exa} \label{S4}
Let $a$ and $b$ be positive integers. Let $S(a,b)$ denote an $S^1$-action on $S^4$  with weights $a$ and $b$. That is, let $S^1$ act on the 4-sphere $S^4=\{(z_1,z_2,x) \in \mathbb{C}^2 \times \mathbb{R} : |z_1|^2+|z_2|^2+x^2=1\}$ by
\begin{center}
$g \cdot (z_1,z_2,x)=(g^a z_1, g^b z_2, x)$.
\end{center}
The action has two fixed points $p_1=(0,0,1)$ and $p_2=(0,0,-1)$. Both fixed points have weights $\{a,b\}$, and the sign of $p_1$ is $+1$ and the sign of $p_2$ is $-1$. Figure \ref{fig1-1} is the graph of weights for $S^4$ with this action.
\end{exa}

The above example and its graph can be extended to any even sphere; see \cite{jan18, jan22}. Any $S^1$-action on a compact oriented manifold with two fixed points have the same signs and weights at the fixed points as a linear $S^1$-action on $S^{2n}$ \cite{jan18, jan20, kos84}. In addition, see \cite{mus16} for $S^1$-actions with two fixed points on unitary manifolds.

\begin{exa}
Let $0<a<b$ be integers. Consider an action of ${S}^1$ on the complex projective space $\mathbb{CP}^2$ by
$$
g \cdot [z_0:z_1:z_2]=[z_0:g^a z_1:g^b z_2]
$$
for all $g \in S^1 \subset \mathbb{C}$. The action has 3 fixed points, $p_1=[1:0:0]$, $p_2=[0:1:0]$, and $p_3=[0:0:1]$, whose signs are $+1$, $-1$, and $+1$, and weights are $\{a,b\}$, $\{a, b-a\}$, and $\{b-a, b\}$, as their complex weights are $\{a,b\}$, $\{-a,b-a\}$, and $\{-b,a-b\}$, respectively. Denote $\mathbb{CP}^2$ with this action by $P(a,b)$ and $-\mathbb{CP}^2$ by $Q(a,b)$, where $-\mathbb{CP}^2$ denotes $\mathbb{CP}^2$ with the opposite orientation. Figure \ref{fig1-2} is a graph of weights for $P(a,b)$, and Figure \ref{fig1-3} is a graph of weights for $Q(a,b)$.
\end{exa}

\section{Equivariant connected sum and splitting} \label{s5}

Connected sum and splitting are classical operations in topology. If two manifolds of same dimension admit torus actions, then we may take a connected sum equivariantly, so that the connected sum is equipped with a torus action and similarly for splitting. Orlik and Raymond considered equivariant splitting of a $T^2$-action on a compact, simply connected oriented 4-manifold into a rather minimal manifold \cite{OR1} and Fintushel considered that of an $S^1$-action \cite{F2}. They showed that such a manifold is an equivariant connected sum of a homology $S^4$ and copies of $\pm \mathbb{CP}^2$ and $S^2 \times S^2$. 
Via equivariant splitting, the second author studied generators of unitary $S^1$-bordism ring \cite{mus83}. In this section, we discuss equivariant connected sum and splitting, of a circle action on an oriented 4-manifold.

\subsection{Connected sum and splitting} \label{s5.1}

Let $M$ be a 4-dimensional compact oriented $S^1$-manifold. Let $p$ be an isolated fixed point. Let $a$ and $b$ be the weights at $p$.

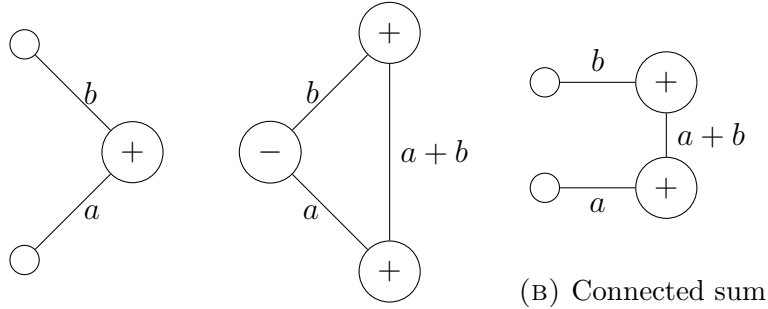
\begin{figure}
\centering
\begin{subfigure}[b][4.6cm][s]{.4\textwidth}
\centering
\vfill
\begin{tikzpicture}[state/.style ={circle, draw}]
\node[state] (a) {$+$};
\node[state] (b) [below left=of a] {};
\node[state] (c) [above left=of a] {};
\node[state] (d) [right=of a]{$-$};
\node[state] (e) [below right=of d] {$+$};
\node[state] (f) [above right=of d] {$+$};
\path (a) edge node[right] {$a$} (b);
\path (a) edge node [right] {$b$} (c);
\path (d) edge node[left] {$a$} (e);
\path (e) edge node [right] {$a+b$} (f);
\path (d) edge node [left] {$b$} (f);
\end{tikzpicture}
\vfill
\caption{Graphs of weights for $M$ and $P(a,a+b)$}\label{fig4-1}
\vspace{\baselineskip}
\end{subfigure}\qquad
\begin{subfigure}[b][4.6cm][s]{.4\textwidth}
\centering
\vfill
\begin{tikzpicture}[state/.style ={circle, draw}]
\node[state] (a) {};
\node[state] (b) [above=of a] {};
\node[state] (c) [right=of a] {$+$};
\node[state] (d) [right=of b] {$+$};
\path (a) edge node[below] {$a$} (c);
\path (b) edge node[above] {$b$} (d);
\path (c) edge node[right] {$a+b$} (d);
\end{tikzpicture}
\vfill
\caption{Connected sum}\label{fig4-2}
\vspace{\baselineskip}
\end{subfigure}\qquad
\caption{Equivariant connected sum of $M$ and $P(a,a+b)$ at $p$ and $[0:1:0]$; its converse is equivariant splitting}\label{fig4}
\end{figure}

Suppose that the sign of $p$ is $+1$. Consider the manifold $P(a,a+b)$; its fixed point $[0:1:0]$ has weights $\{a,b\}$ and sign $-1$. Thus, there is an orientation reversing $S^1$-equivariant diffeomorphism between a neighborhood of $p$ in $M$ and a neighborhood of $[0:1:0]$ in $P(a,a+b)$. By using the equivariant diffeomorphism, we can take an equivariant connected sum of $M$ and $P(a,a+b)$ at $p$ and $[0:1:0]$, to get another 4-dimensional compact oriented $S^1$-manifold $N$. Figure \ref{fig4-1} is a graph of weights for $M \sqcup P(a,a+b)$, and Figure \ref{fig4-2} is one for $N$.

The converse of the above equivariant connected sum is equivariant splitting. Let $N$ be a 4-dimensional compact oriented $S^1$-manifold and suppose that there is an invariant 2-sphere with weight $a+b$ between two fixed points $q$ and $q'$, which have weights $\{a,a+b\}$ and $\{b,a+b\}$ and both have sign $+1$. Then by the reverse operation of the above equivariant connected sum, we can equivariantly split $N$ into another 4-dimensional compact oriented $S^1$-manifold $M$ and $P(a,a+b)$, whose graphs of weights are the left and the right of Figure \ref{fig4-1}, respectively.

\begin{figure}
\centering
\begin{subfigure}[b][4.6cm][s]{.4\textwidth}
\centering
\vfill
\begin{tikzpicture}[state/.style ={circle, draw}]
\node[state] (a) {$-$};
\node[state] (b) [below left=of a] {};
\node[state] (c) [above left=of a] {};
\node[state] (d) [right=of a]{$+$};
\node[state] (e) [below right=of d] {$-$};
\node[state] (f) [above right=of d] {$-$};
\path (a) edge node[right] {$a$} (b);
\path (a) edge node [right] {$b$} (c);
\path (d) edge node[left] {$a$} (e);
\path (e) edge node [right] {$a+b$} (f);
\path (d) edge node [left] {$b$} (f);
\end{tikzpicture}
\vfill
\caption{Graphs of weights for $M$ and $Q(a,a+b)$}\label{fig5a}
\vspace{\baselineskip}
\end{subfigure}\qquad
\begin{subfigure}[b][4.6cm][s]{.4\textwidth}
\centering
\vfill
\begin{tikzpicture}[state/.style ={circle, draw}]
\node[state] (a) {};
\node[state] (b) [above=of a] {};
\node[state] (c) [right=of a] {$-$};
\node[state] (d) [right=of b] {$-$};
\path (a) edge node[below] {$a$} (c);
\path (b) edge node[above] {$b$} (d);
\path (c) edge node[right] {$a+b$} (d);
\end{tikzpicture}
\vfill
\caption{Connected sum}\label{fig5b}
\vspace{\baselineskip}
\end{subfigure}\qquad
\caption{Equivariant connected sum of $M$ and $Q(a,a+b)$ at $p$ and $[0:1:0]$; its converse is equivariant splitting}\label{fig5}
\end{figure}
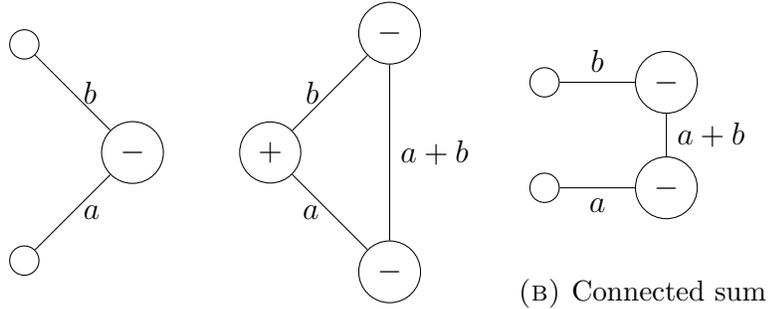

Suppose that the sign of $p$ is $-1$. In this case, we can take an equivariant connected sum $N$ of $M$ and $Q(a,a+b)$ at $p$ and $[0:1:0]$; now $[0:1:0]$ has the same weights $\{a,b\}$ but has sign $+1$. Figure \ref{fig5a} is a graph of weights for $M \sqcup Q(a,a+b)$, and Figure \ref{fig5b} is one for $N$. Conversely, if in a 4-dimensional compact oriented $S^1$-manifold $N$ there is an invariant 2-sphere with weight $a+b$ between two fixed points that have weights $\{a,a+b\}$ and $\{b,a+b\}$ and both fixed points have sign $-1$, we can equivariant split $N$ into $M$ and $Q(a,a+b)$; Figure \ref{fig5b} is a graph of weights for $N$ and Figure \ref{fig5a} is one for $M \sqcup Q(a,a+b)$.

\begin{figure}
\centering
\begin{subfigure}[b][5cm][s]{.5\textwidth}
\centering
\vfill
\begin{tikzpicture}[state/.style ={circle, draw}]
\node[state] (a) {$p,+$};
\node[state] (b) [below left=of a] {$+$};
\node[state] (c) [above left=of a] {$-$};
\node[state] (d) [right=of a]{$q,-$};
\node[state] (e) [below right=of d] {$-$};
\node[state] (f) [above right=of d] {$+$};
\path (a) edge node[right] {$a$} (b);
\path (a) edge node [right] {$b$} (c);
\path (b) edge node [right] {$b-a$} (c);
\path (d) edge node[left] {$a$} (e);
\path (e) edge node [right] {$b-a$} (f);
\path (d) edge node [left] {$b$} (f);
\end{tikzpicture}
\vfill
\caption{Graphs of weights for $P(a,b)$ and $Q(a,b)$}\label{fig5a-}
\vspace{\baselineskip}
\end{subfigure}\qquad
\begin{subfigure}[b][5cm][s]{.4\textwidth}
\centering
\vfill
\begin{tikzpicture}[state/.style ={circle, draw}]
\node[state] (a) {$p_1,+$};
\node[state] (b) [above=of a] {$p_4,-$};
\node[state] (c) [right=of a] {$p_2,-$};
\node[state] (d) [right=of b] {$p_3,+$};
\path (a) edge node[right] {$b-a$} (b);
\path (a) edge node[below] {$a$} (c);
\path (b) edge node[above] {$b$} (d);
\path (c) edge node[right] {$b-a$} (d);
\end{tikzpicture}
\vfill
\caption{Connected sum of $P(a,b)$ and $Q(a,b)$ at $[1:0:0]$}\label{fig5b-}
\vspace{\baselineskip}
\end{subfigure}\qquad
\caption{Equivariant connected sum of $P(a,b)$ and $Q(a,b)$ at $[1:0:0]$}\label{fig5-}
\end{figure}
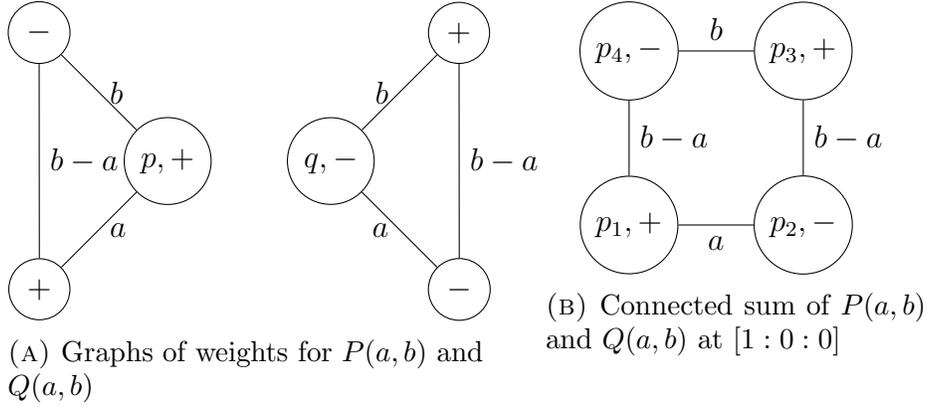

\begin{exa} \label{e23}
Consider the two $S^1$-manifolds $P(a,b)$ and $Q(a,b)$. Their graphs of weights are the left and right figures of Figure \ref{fig5-}; note that the positions of vertices are different from Figures \ref{fig1-2} and \ref{fig1-3}. The fixed point $p=[1:0:0]$ in $P(a,b)$ has sign $+1$ and weights $\{a,b\}$ and the fixed point $q=[1:0:0]$ in $Q(a,b)$ has sign $-1$ and weights $\{a,b\}$. Therefore, Let $P\# Q(a,b)$ denote a connected sum of $P(a,b)$ and $Q(a,b)$ at $p$ and $q$. Then Figure \ref{fig5b-} is a graph of weights of $P\# Q(a,b)$.
\end{exa}

\begin{figure}
\centering
\begin{subfigure}[b][4.3cm][s]{.8\textwidth}
\centering
\vfill
\begin{tikzpicture}[state/.style ={circle, draw}]
\node[state] (a) {$q,-$};
\node[state] (b) [left=of a] {};
\node[state] (c) [above=of a] {$p,+$};
\node[state] (d) [left=of c]{};
\node[state] (e) [right=of a] {$p_1,+$};
\node[state] (f) [above=of e] {$p_2,-$};
\node[state] (g) [right=of e] {$p_4,-$};
\node[state] (h) [right=of f] {$p_3,+$};
\path (a) edge node[below] {$b$} (b);
\path (a) edge node [right] {$a$} (c);
\path (c) edge node[above] {$b$} (d);
\path (e) edge node[left] {$a$} (f);
\path (e) edge node[below] {$b$} (g);
\path (f) edge node[above] {$b$} (h);
\path (g) edge node[right] {$a+b$} (h);
\end{tikzpicture}
\vfill
\caption{Graphs of weights for $M$ and $P\# Q(a,a+b)$}\label{fig6a}
\end{subfigure}\qquad
\begin{subfigure}[b][4.3cm][s]{.5\textwidth}
\centering
\vfill
\begin{tikzpicture}[state/.style ={circle, draw}]
\node[state] (a) {$p_1,-$};
\node[state] (b) [left=of a] {};
\node[state] (c) [above=of a] {$p_3,+$};
\node[state] (d) [left=of c]{};
\path (a) edge node[below] {$b$} (b);
\path (a) edge node [right] {$a+b$} (c);
\path (c) edge node[above] {$b$} (d);
\end{tikzpicture}
\vfill
\caption{Connected sum}\label{fig6b}
\end{subfigure}
\caption{Connected sum at the invariant spheres of $M$ and $P\# Q(a,a+b)$}\label{fig6}
\end{figure}
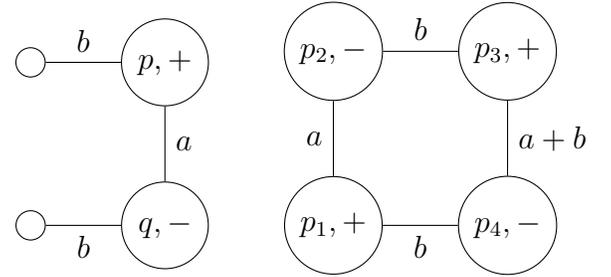
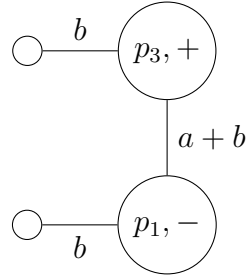

Let $M$ be a 4-dimensional compact oriented $S^1$-manifold. Suppose that two fixed points $p$ and $q$ both have weights $\{a,b\}$, $\varepsilon_p=+1$, $\varepsilon_q=-1$, and there is an invariant 2-sphere $S$ with weight $a$ between $p$ and $q$. 
Now, there is an orientation reversing equivariant diffeomorphism between an equivariant tubular neighborhod of the 2-sphere $S$ and an equivariant tubular neighborhood of the 2-sphere $S'$ of
$P\# Q(a,a+b)$ containing $p_1$ and $p_2$ of Example \ref{e23}. Thus, we can equivariantly glue $M$ and $P\# Q(a,a+b)$ along neighborhoods of $S$ and $S'$, to construct another $S^1$-manifold $N$. Figure \ref{fig6a} is a graph of weights for $M \sqcup P\# Q(a,a+b)$, and Figure \ref{fig6b} is a graph of weights for $N$.

The converse of the above equivariant connected sum is an equivariant splitting of $N$ into $M$ and $P \# Q(a,a+b)$; Let $N$ be a 4-dimensional compact oriented $S^1$-manifold and let $G_N$ be its graph of weights. Suppose that there is an invariant sphere of weight $a+b$ between two fixed points $p'$ and $q'$ such that $\varepsilon_{p'}=+1$, $\varepsilon_{q'}=-1$, and both $p'$ and $q'$ have weights $\{a+b,b\}$ for some positive integers $a$ and $b$; Figure \ref{fig6b} is a graph of weights for $N$ with $p_3=p'$ and $p_1=q'$. By the reverse of the above equivariant connected sum, we can equivariantly split $N$ into another 4-dimensional compact oriented $S^1$-manifold $M$ and $P \# Q(a,a+b)$, where Figure \ref{fig6a} is a graph of weights for $M \sqcup P \# Q(a,a+b)$.

\subsection{Reducing every weight to 1 via splitting}

Given a 4-dimensional compact oriented $S^1$-manifold, via equivariant splitting we can convert it to another $S^1$-manifold, which is minimal in the sense that weights are small.

\begin{theorem}\label{ReduceWeight} Let $M$ be a 4-dimensional compact oriented $S^1$-manifold.
Then we can decompose $M$ into an equivariant connected sum of $M_0$ and copies of $\pm \mathbb{CP}^2$ and $\Sigma_1$, where every weight in the normal bundle of any fixed component of $M_0$ is $1$. \end{theorem}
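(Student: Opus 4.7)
My plan is induction on the total weight $W(M) := \sum_{i,j} w_{ij}$ summed over all weights at the isolated fixed points of $M$. Observe first that effectiveness forces every fixed surface's normal weight to be $1$, because otherwise the cyclic subgroup $\mathbb{Z}_{w_F}$ would fix an open tubular neighborhood of the surface and hence, by connectedness, act trivially on the corresponding component of $M$. Thus the desired conclusion is equivalent to saying every weight at every isolated fixed point of $M_0$ equals $1$, which is exactly $W(M_0) = 2m_0$. The base case (every weight equal to $1$) is immediate with $M_0 := M$.

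For the inductive step, suppose some weight exceeds $1$, and let $w := w_{\max}(M)>1$. Pick an isolated fixed point $p$ carrying the weight $w$. By Lemma \ref{isotropysphere}, $p$ lies on an invariant 2-sphere $S$ of weight $w$ meeting a second isolated fixed point $q$ of weight $w$. Write the weights at $p,q$ as $\{a,w\}$ and $\{b,w\}$; since $w = w_{\max}$ and $\gcd(a,w) = \gcd(b,w) = 1$ with $w > 1$, both $a$ and $b$ lie strictly between $0$ and $w$. Let $\varepsilon_p, \varepsilon_q \in \{\pm 1\}$ be their signs. By Proposition \ref{admi}, the Euler number $n_e = (\varepsilon_p a + \varepsilon_q b)/w$ is an integer, and the bounds $0 < a+b < 2w$ and $|a-b| < w$ force a clean dichotomy. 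In the \textbf{same-sign case} $\varepsilon_p = \varepsilon_q$, the divisibility $w \mid (a+b)$ implies $a+b = w$, and I apply the reverse of the connected sum in Figure \ref{fig4} (signs $+$) or Figure \ref{fig5} (signs $-$) to split $M = M' \# P(a,w)$ or $M = M' \# Q(a,w)$, where $P(a,w) \cong \mathbb{CP}^2$ and $Q(a,w) \cong -\mathbb{CP}^2$; in $M'$ the two points $p, q$ are replaced by a single fixed point of weights $\{a,b\}$. In the \textbf{opposite-sign case} $\varepsilon_p \neq \varepsilon_q$, the divisibility $w \mid (a-b)$ implies $a = b$, and I apply the reverse of Figure \ref{fig6} to split $M = M' \# (P\#Q)(w-b,\, w)$, peeling off a copy of $\Sigma_1 \cong \mathbb{CP}^2 \# \overline{\mathbb{CP}^2}$; in $M'$ the two points $p, q$ are replaced by two new fixed points, each of weights $\{w-b, b\}$.

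A short bookkeeping check shows $W(M') < W(M)$ in each case (the same-sign move drops $W$ by $2w$, and the opposite-sign move by $2b$), so the induction hypothesis applies to $M'$ and gives $M' = M_0 \#(\text{copies of } \pm \mathbb{CP}^2 \text{ and } \Sigma_1)$. Grouping in the summand peeled off at this step yields the same form for $M$. The main obstacle — and the core of the proof — is justifying the clean dichotomy above: it depends crucially on choosing $w$ to be the \emph{global} maximum weight, because only then are $a$ and $b$ forced into the interval $[1,w-1]$ via coprimality, pinning $(\varepsilon_p,\varepsilon_q,a,b,w)$ into one of the three base splitting configurations of Subsection \ref{s5.1}. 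Without the maximality of $w$, either $a+b$ or $|a-b|$ could be a larger multiple of $w$, and auxiliary preparatory connected sums would be needed to bring the sphere into a base configuration; the maximality neatly sidesteps this.
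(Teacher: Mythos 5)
Your proposal is correct and follows essentially the same route as the paper: locate an invariant sphere through a fixed point carrying the maximal weight, use integrality of the Euler number $n_e=(\varepsilon_p a+\varepsilon_q b)/w$ together with $1\le a,b\le w-1$ to force either $a+b=w$ (equal signs) or $a=b$ (opposite signs), and split off $P(a,w)$, $Q(a,w)$, or $P\#Q(w-a,w)$ accordingly. Your only departures are cosmetic improvements — phrasing the iteration as an induction on the total weight $\sum_{i,j}w_{ij}$ (which makes termination explicit, where the paper just says ``repeat'') and supplying the effectiveness argument for why fixed surfaces have normal weight $1$, which the paper asserts without proof.
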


\begin{proof}
This theorem holds if the biggest weight over all fixed components of $M$ is $1$. % or $2$. 
Thus, suppose that the biggest weight $\ell$ is bigger than $1$.
Every weight in the normal bundle of a fixed surface is $1$, and thus any weight larger than $1$ occurs at isolated fixed points.
Let $G_W(M)$ be a graph of weights that satisfies the properties of Theorem \ref{graph}.

Pick an edge $e$ whose weight is $\ell$, and let $p_i$ and $p_j$ be the vertices of $e$. The fixed points $p_i$ and $p_j$ have weights $\{\ell,a\}$ and $\{\ell,b\}$, respectively, for some positive integers $a,b<\ell$. Because $\ell>1$, by Theorem \ref{n_e}, there is an invariant 2-sphere $S_e$ containing $p_i$ and $p_j$, and the Euler number of the normal bundle of $S_e$ is the Euler number $n_e$ of the edge, which is $n_e=\frac{\varepsilon_i a + \varepsilon_j b}{\ell}$; in particular, $n_e$ is an integer. Thus, one of the following two cases occurs.
\begin{enumerate}
\item $\varepsilon_i=\varepsilon_j$ and $a+b=\ell$.
\item $\varepsilon_i=-\varepsilon_j$ and $a=b$.
\end{enumerate}

\begin{figure}
\centering
\begin{subfigure}[b][4.6cm][s]{.3\textwidth}
\centering
\vfill
\begin{tikzpicture}[state/.style ={circle, draw}]
\node[state] (a)  {$p_i,+$};
\node[state] (b) [above=of a] {$p_j,+$};
\node[state] (c) [left=of a] {};
\node[state] (d) [left=of b] {};
\path (a) edge node[below] {$a$} (c);
\path (b) edge node[above] {$b$} (d);
\path (a) edge node[right] {$a+b$} (b);
\end{tikzpicture}
\vfill
\caption{Graph for $M$}\label{fig7a}
\vspace{\baselineskip}
\end{subfigure}\qquad
\begin{subfigure}[b][4.6cm][s]{.4\textwidth}
\centering
\vfill
\begin{tikzpicture}[state/.style ={circle, draw}]
\node[state] (a) {$+$};
\node[state] (b) [below left=of a] {};
\node[state] (c) [above left=of a] {};
\node[state] (d) [right=of a]{$-$};
\node[state] (e) [below right=of d] {$+$};
\node[state] (f) [above right=of d] {$+$};
\path (a) edge node[right] {$a$} (b);
\path (a) edge node [right] {$b$} (c);
\path (d) edge node[left] {$a$} (e);
\path (e) edge node [right] {$a+b$} (f);
\path (d) edge node [left] {$b$} (f);
\end{tikzpicture}
\vfill
\caption{Graphs for $M'$ and $P(a,a+b)$}\label{fig7b}
\vspace{\baselineskip}
\end{subfigure}\qquad
\caption{Equivariant splitting of $M$ into $M'$ and $P(a,a+b)$}\label{fig7}
\end{figure}
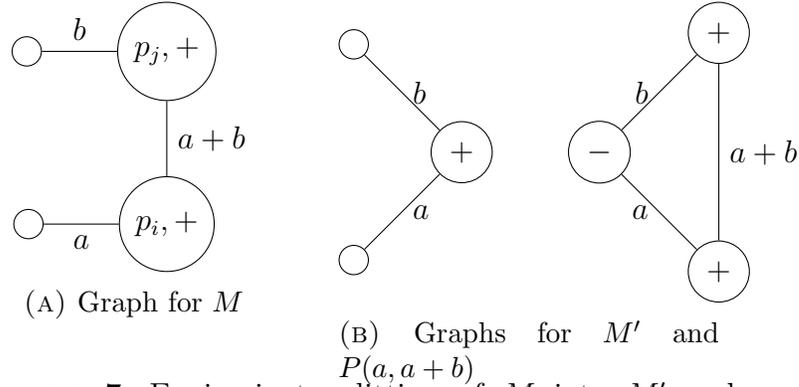

Assume that Case (1) holds. First, suppose that $\varepsilon_i=\varepsilon_j=+1$. Figure \ref{fig7a} is a graph of weights for $M$ near $p_i$ and $p_j$. As described in the third paragraph of Section \ref{s5.1}, we can split $M$ equivariantly into another 4-dimensional compact oriented $S^1$-manifold $M'$ and $P(a,a+b)$, whose graphs of weights are the left (denoted by $G_W(M')$) and right figures of Figure \ref{fig7b}, respectively.

\begin{figure}
\centering
\begin{subfigure}[b][4.6cm][s]{.3\textwidth}
\centering
\vfill
\begin{tikzpicture}[state/.style ={circle, draw}]
\node[state] (a)  {$p_i,-$};
\node[state] (b) [above=of a] {$p_j,-$};
\node[state] (c) [left=of a] {};
\node[state] (d) [left=of b] {};
\path (a) edge node[below] {$a$} (c);
\path (b) edge node[above] {$b$} (d);
\path (a) edge node[right] {$a+b$} (b);
\end{tikzpicture}
\vfill
\caption{Graph for $M$}\label{fig8a}
\vspace{\baselineskip}
\end{subfigure}\qquad
\begin{subfigure}[b][4.6cm][s]{.4\textwidth}
\centering
\vfill
\begin{tikzpicture}[state/.style ={circle, draw}]
\node[state] (a) {$+$};
\node[state] (b) [below left=of a] {};
\node[state] (c) [above left=of a] {};
\node[state] (d) [right=of a]{$+$};
\node[state] (e) [below right=of d] {$-$};
\node[state] (f) [above right=of d] {$-$};
\path (a) edge node[right] {$a$} (b);
\path (a) edge node [right] {$b$} (c);
\path (d) edge node[left] {$a$} (e);
\path (e) edge node [right] {$a+b$} (f);
\path (d) edge node [left] {$b$} (f);
\end{tikzpicture}
\vfill
\caption{Graphs for $M'$ and $Q(a,a+b)$}\label{fig8b}
\vspace{\baselineskip}
\end{subfigure}\qquad
\caption{Equivariant splitting of $M$ into $M'$ and $Q(a,a+b)$}\label{fig8}
\end{figure}
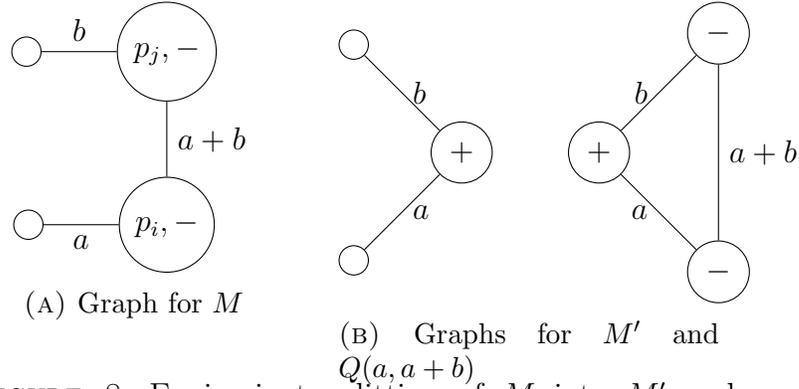

The case that $\varepsilon_i=\varepsilon_j=-1$ is analogous. Figure \ref{fig8a} is a graph of weights for $M$ near $p_i$ and $p_j$. As described in the fourth paragraph of Section \ref{s5.1}, we can split $M$ equivariantly into another 4-dimensional compact oriented $S^1$-manifold $M'$ and $Q(a,a+b)$, whose graphs of weights are the left (denoted by $G_W(M')$) and right figures of Figure \ref{fig8b}, respectively.

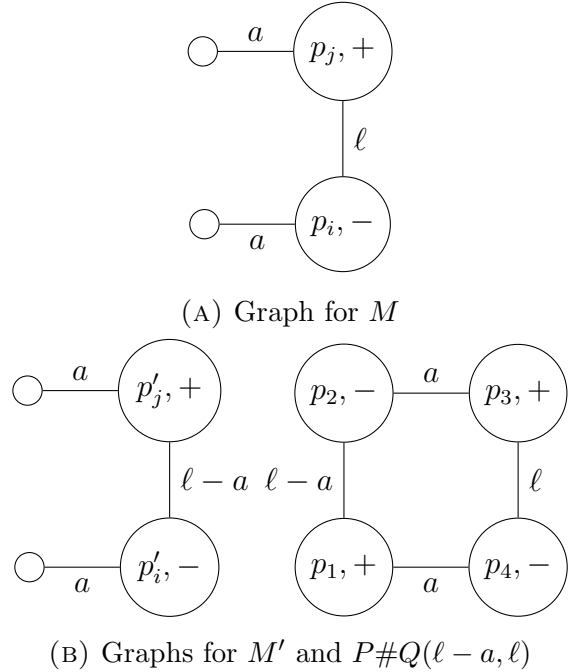
\begin{figure}
\centering
\begin{subfigure}[b][4.5cm][s]{.4\textwidth}
\centering
\vfill
\begin{tikzpicture}[state/.style ={circle, draw}]
\node[state] (a) {$p_i,-$};
\node[state] (b) [left=of a] {};
\node[state] (c) [above=of a] {$p_j,+$};
\node[state] (d) [left=of c]{};
\path (a) edge node[below] {$a$} (b);
\path (a) edge node [right] {$\ell$} (c);
\path (c) edge node[above] {$a$} (d);
\end{tikzpicture}
\vfill
\caption{Graph for $M$}\label{fig9a}
\end{subfigure}\qquad
\begin{subfigure}[b][4.5cm][s]{.6\textwidth}
\centering
\vfill
\begin{tikzpicture}[state/.style ={circle, draw}]
\node[state] (a) {$p_i',-$};
\node[state] (b) [left=of a] {};
\node[state] (c) [above=of a] {$p_j',+$};
\node[state] (d) [left=of c]{};
\node[state] (e) [right=of a] {$p_1,+$};
\node[state] (f) [above=of e] {$p_2,-$};
\node[state] (g) [right=of e] {$p_4,-$};
\node[state] (h) [right=of f] {$p_3,+$};
\path (a) edge node[below] {$a$} (b);
\path (a) edge node [right] {$\ell-a$} (c);
\path (c) edge node[above] {$a$} (d);
\path (e) edge node[left] {$\ell-a$} (f);
\path (e) edge node[below] {$a$} (g);
\path (f) edge node[above] {$a$} (h);
\path (g) edge node[right] {$\ell$} (h);
\end{tikzpicture}
\vfill
\caption{Graphs for $M'$ and $P \# Q(\ell-a,\ell)$}\label{fig9b}
\end{subfigure}\qquad
\caption{Equivariant splitting of $M$ into $M'$ and $P \# Q(\ell-a,\ell)$}\label{fig9}
\end{figure}

Assume that Case (2) holds. Figure \ref{fig9a} is a graph of weights for $M$ near $p_i$ and $p_j$. Without loss of generality, assume that $\varepsilon_i=-1=-\varepsilon_j$. As described in the last paragraph of Section \ref{s5.1}, we can split $M$ equivariantly into another 4-dimensional compact oriented $S^1$-manifold $M'$ and $P \# Q(\ell-a,\ell)$, whose graphs of weights are the left (denoted by $G_W(M')$) and right figures of Figure \ref{fig9b}, respectively.

We have that $G_W(M')$ is a graph of weights for $M'$. On $M'$ we consider the biggest weight, and repeat the above prodecure. Continuing the above argument, this theorem follows. \end{proof}

\subsection{Further: convert to one that has fixed surfaces only}

\begin{figure}
\centering
\begin{subfigure}[b][4cm][s]{.4\textwidth}
\centering
\vfill
\begin{tikzpicture}[state/.style ={circle, draw}]
\node[state] (a) {$+$};
\node[state] (b) [above=of a] {$S^2,1$};
\path (a) [bend right =20]edge node[right] {$1$} (b);
\path (b) [bend right =20]edge node[left] {$1$} (a);
\end{tikzpicture}
\vfill
\caption{$P(1,0)$}\label{fig10a}
\vspace{\baselineskip}
\end{subfigure}\qquad
\begin{subfigure}[b][4cm][s]{.4\textwidth}
\centering
\vfill
\begin{tikzpicture}[state/.style ={circle, draw}]
\node[state] (a) {$-$};
\node[state] (b) [above=of a] {$S^2,-1$};
\path (a) [bend right =20]edge node[right] {$1$} (b);
\path (b) [bend right =20]edge node[left] {$1$} (a);
\end{tikzpicture}
\vfill
\caption{$Q(1,0)$}\label{fig10b}
\vspace{\baselineskip}
\end{subfigure}\qquad
\caption{Graph for $P(1,0)$ and $Q(1,0)$}\label{fig10}
\end{figure}
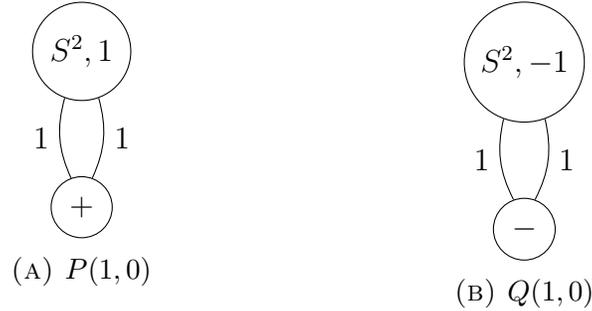

We can further convert any 4-dimensional compact oriented $S^1$-manifold into another $S^1$-manifold, which has only fixed surfaces.

\begin{theorem} \label{surface} Let the circle act on a 4-dimensional compact oriented manifold $M$. Then by taking equivariant splittings with $\pm \mathbb{CP}^2$, $\Sigma_1$ and by taking connected sums with $\pm \mathbb{CP}^2$, we can convert $M$ into another 4-dimensional compact oriented $S^1$-manifold $N$, which only has fixed surfaces.
\end{theorem}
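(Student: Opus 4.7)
The plan is to reduce to the weight-one case and then absorb each remaining isolated fixed point into a new fixed surface by an equivariant connected sum with a suitable copy of $\pm \mathbb{CP}^2$. First I would apply Theorem \ref{ReduceWeight} to $M$: by equivariant splittings with $\pm \mathbb{CP}^2$ and $\Sigma_1$, I produce a 4-dimensional compact oriented $S^1$-manifold $M_0$ in which every weight in the normal bundle of any fixed component equals $1$. In particular, every remaining isolated fixed point of $M_0$ has weights $\{1,1\}$ and some sign $\varepsilon_p = \pm 1$, while every fixed surface is already the kind of fixed component we want.

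Next, I would eliminate the isolated fixed points of $M_0$ one at a time. An isolated fixed point $p$ of $M_0$ with weights $\{1,1\}$ has an equivariant open neighborhood modeled on $\mathbb{C}^2$ with the diagonal $S^1$-action $g\cdot(z_1,z_2)=(gz_1,gz_2)$; this is exactly the local model at the isolated fixed point of $P(1,0)$ (with sign $+1$) and of $Q(1,0)$ (with sign $-1$), as recorded in Figure \ref{fig10}. If $\varepsilon_p=+1$, I take an equivariant connected sum of $M_0$ with $Q(1,0)$ at $p$ and the isolated fixed point of $Q(1,0)$, using an orientation-reversing equivariant diffeomorphism between equivariant balls around these two points; if $\varepsilon_p=-1$, I instead connect sum with $P(1,0)$. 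Because the action on the boundary $S^3 \subset \mathbb{C}^2$ is free (the Hopf action), the gluing region contains no $S^1$-fixed points, so the two glued isolated fixed points simply disappear and no new isolated fixed points are created. All other fixed components of $M_0$ persist, and the fixed 2-sphere $F$ of the attached $\pm \mathbb{CP}^2$ survives as a new fixed surface in the connected sum.

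Iterating this procedure over the finitely many isolated fixed points of $M_0$, each step removes one isolated fixed point and adds one fixed surface, while leaving all other fixed components unchanged. After finitely many connected sums with copies of $\pm \mathbb{CP}^2$, I obtain a 4-dimensional compact oriented $S^1$-manifold $N$ whose fixed point set consists only of fixed surfaces. Combined with the initial equivariant splittings from Theorem \ref{ReduceWeight}, this writes $M$ as the desired combination of splittings with $\pm \mathbb{CP}^2$, $\Sigma_1$ and connected sums with $\pm \mathbb{CP}^2$ that converts $M$ into $N$.

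I expect the main technical point to be a careful bookkeeping argument that the equivariant connected sum at $p$ genuinely eliminates $p$ without disturbing the rest of the fixed set: one has to match the local models with compatible orientations (which is why the sign condition $\varepsilon_p = -\varepsilon_{p'}$ is essential) and verify that the orientation-reversing equivariant diffeomorphism between the boundary 3-spheres exists. Once that is in place, the induction on the number of isolated fixed points — guaranteed to terminate because each step strictly decreases this finite count — finishes the proof.
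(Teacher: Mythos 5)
Your proposal is correct and follows essentially the same route as the paper: first invoke Theorem \ref{ReduceWeight} to reduce to a manifold $M_0$ whose isolated fixed points all have weights $\{1,1\}$, then eliminate each such point $p$ by an equivariant connected sum with $Q(1,0)$ (if $\varepsilon_p=+1$) or $P(1,0)$ (if $\varepsilon_p=-1$) at $p$ and the isolated fixed point $[1:0:0]$, trading the isolated fixed point for the fixed 2-sphere of the attached $\pm\mathbb{CP}^2$. Your additional remarks on the local model $\mathbb{C}^2$ with the diagonal action and the freeness of the Hopf action on the gluing sphere are consistent with, and slightly more explicit than, the paper's argument.
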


\begin{proof}
By Theorem \ref{ReduceWeight}, we can decompose $M$ into an equivariant connected sum of $M_0$ and copies of $\pm \mathbb{CP}^2$, where every weight in the normal bundle of any fixed component of $M_0$ is 1.

Let $p$ be an isolated fixed point in $M_0$. It has weights $\{1,1\}$. Suppose that $\varepsilon_p=+1$. In this case, we can take an equivariant connected sum of $M_0$ and $Q(1,0)$  at $p$ and $[1:0:0]$ to construct another $S^1$-manifold $M'$, in which $p$ is replaced with a fixed 2-sphere. Figure \ref{fig11a} is a graph of weights for $M \sqcup Q(1,0)$ and Figure \ref{fig11b} is one for $M'$. 

If $\varepsilon_p=-1$, then we take an equivariant connected sum of $M_0$ and $P(1,0)$ at $p$ and $[1:0:0]$; Figure \ref{fig12a} is a graph of weights for $M \sqcup P(1,0)$ and Figure \ref{fig12b} is one for $M'$.

Continuing this to every isolated fixed point, the theorem follows.
\end{proof}

\begin{figure}
\centering
\begin{subfigure}[b][4.2cm][s]{.9\textwidth}
\centering
\vfill
\begin{tikzpicture}[state/.style ={circle, draw}]
\node[state] (a) {$p,+$};
\node[state] (b) [below left=of a] {};
\node[state] (c) [above left=of a] {};
\node[state] (e) [right=of a] {$p_1,-$};
\node[state] (f) [right=of e] {$S^2,-1$};
\path (a) edge node[below] {$1$} (b);
\path (a) edge node [right] {$1$} (c);
\path (e) [bend right =20]edge node[below] {$1$} (f);
\path (f) [bend right =20]edge node[above] {$1$} (e);
\end{tikzpicture}
\vfill
\caption{Graphs for $M_0$ and $Q(1,0)$}\label{fig11a}
\vspace{\baselineskip}
\end{subfigure}\qquad
\begin{subfigure}[b][4.2cm][s]{.9\textwidth}
\centering
\vfill
\begin{tikzpicture}[state/.style ={circle, draw}]
\node[state] (a) {$S^2,-1$};
\node[state] (b) [below left=of a] {};
\node[state] (c) [above left=of a] {};
\path (a) edge node[below] {$1$} (b);
\path (a) edge node [right] {$1$} (c);
\end{tikzpicture}
\vfill
\caption{Connected sum}\label{fig11b}
\vspace{\baselineskip}
\end{subfigure}\qquad
\caption{Connected sum of $M_0$ and $Q(1,0)$ at $p$ and $p_1$}\label{fig11}
\end{figure}
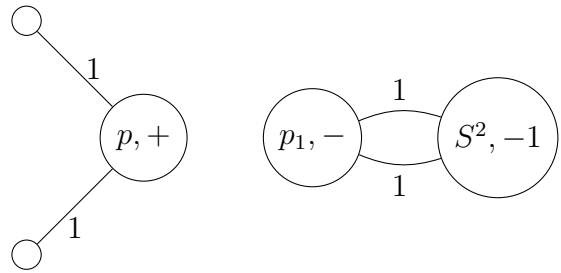
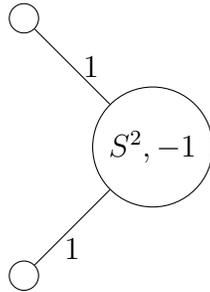

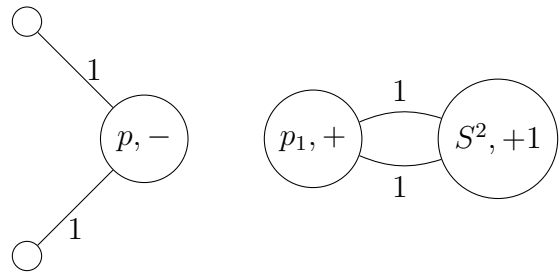
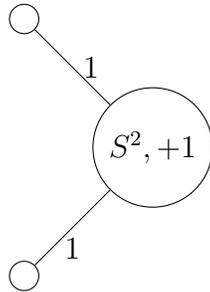
\begin{figure}
\centering
\begin{subfigure}[b][4.2cm][s]{.9\textwidth}
\centering
\vfill
\begin{tikzpicture}[state/.style ={circle, draw}]
\node[state] (a) {$p,-$};
\node[state] (b) [below left=of a] {};
\node[state] (c) [above left=of a] {};
\node[state] (e) [right=of a] {$p_1,+$};
\node[state] (f) [right=of e] {$S^2,+1$};
\path (a) edge node[below] {$1$} (b);
\path (a) edge node [right] {$1$} (c);
\path (e) [bend right =20]edge node[below] {$1$} (f);
\path (f) [bend right =20]edge node[above] {$1$} (e);
\end{tikzpicture}
\vfill
\caption{Graphs for $M_0$ and $P(1,0)$}\label{fig12a}
\vspace{\baselineskip}
\end{subfigure}\qquad
\begin{subfigure}[b][4.2cm][s]{.9\textwidth}
\centering
\vfill
\begin{tikzpicture}[state/.style ={circle, draw}]
\node[state] (a) {$S^2,+1$};
\node[state] (b) [below left=of a] {};
\node[state] (c) [above left=of a] {};
\path (a) edge node[below] {$1$} (b);
\path (a) edge node [right] {$1$} (c);
\end{tikzpicture}
\vfill
\caption{Connected sum}\label{fig12b}
\vspace{\baselineskip}
\end{subfigure}\qquad
\caption{Connected sum of $M_0$ and $P(1,0)$ at $p$ and $p_1$}\label{fig12}
\end{figure}

Note that by the second and third equations of Theorem \ref{3formulas}, the manifold $N$ satisfies
$$\sum_{i=1}^k n_j=0$$
and
$$L(N')=0.$$

Theorem \ref{surface} raises the following natural questions.

\begin{que}
Let $N$ be a 4-dimensional compact oriented $S^1$-manifold with fixed surfaces only. 
\begin{enumerate}
\item Is the normal bundle of any fixed surface of $N$ trivial?
\item If not, can we construct an example of such $N$ that has only fixed surfaces but some normal bundle of a fixed surface is not trivial?
\item Or, can we further convert $N$ to another $S^1$-manifold, in which the normal bundle of any fixed surface has Euler number zero?
\end{enumerate}
\end{que}

\section{Isolated fixed points} \label{s6}

\subsection{Converse to Theorems \ref{AHformula} and \ref{graph}}

In the case of isolated fixed points, we prove the converse of a combination of Theorem \ref{AHformula} and Theorem \ref{graph} that if an abstract graph of weights satisfies the Atiyah-Hirzebruch formula, then the graph is realized as one for a 4-dimensional oriented $S^1$-manifold.

\begin{theorem} \label{graphconverse}
Let $G_W$ be an admissible graph of weights that has points only. Suppose that $G_W$ satisfies the Atiyah-Hirzebruch formula, Equation \eqref{ASformula}.
Then there exists a 4-dimensional compact connected oriented $S^1$-manifold, whose graph of weights is $G_W$.
\end{theorem}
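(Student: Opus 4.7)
The plan is to argue by induction on the lexicographic pair $(\ell, N_\ell)$, where $\ell = \max_e w_e$ is the largest edge label in $G_W$ and $N_\ell$ is the number of edges with that label. The inductive step uses the equivariant connected sum / splitting operations of Section \ref{s5.1} to relate $G_W$ to a strictly simpler graph of weights.

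For the inductive step with $\ell > 1$, pick an edge $e$ of weight $\ell$ joining $p_i, p_j$ with signs $\varepsilon_i, \varepsilon_j$ and other weights $a, b$. Coprimality with and maximality of $\ell$ force $a, b < \ell$, and admissibility of $n_e = (\varepsilon_i a + \varepsilon_j b)/\ell$ together with $|\varepsilon_i a + \varepsilon_j b| < 2\ell$ yields exactly the two alternatives from the proof of Theorem \ref{ReduceWeight}: (1) $\varepsilon_i = \varepsilon_j$ with $a + b = \ell$, or (2) $\varepsilon_i = -\varepsilon_j$ with $a = b$. In case (1) with common sign $+$, form $G_W'$ by merging $p_i, p_j$ into a single vertex $p$ of sign $+$ and weights $\{a, b\}$, with the outside weight-$a$ and weight-$b$ edges rerouted to $p$; this is the graph-level inverse of the connected sum with $P(a, a+b)$ in Section \ref{s5.1}. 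Case (1) with common sign $-$ is handled symmetrically using $Q(a, a+b)$; case (2) is handled by forming $G_W'$ with new vertices $p_i', p_j'$ of weights $\{a, \ell - a\}$ connected by a weight-$(\ell - a)$ edge, inverting the connected sum with $P \# Q(\ell - a, \ell)$. Coprimality of each new weight pair is automatic from $\gcd(a, \ell) = 1$.

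One then checks that $G_W'$ is again an admissible graph of weights satisfying AHF: the Euler numbers of affected edges differ from those in $G_W$ by the integer Euler numbers of the corresponding edges of the standard piece, and the AHF identity for $G_W'$ is obtained by subtracting the AHF identity for the standard piece (which holds since the piece is a genuine manifold of known signature, by Theorem \ref{AHformula}) from that for $G_W$. Since the distinguished edge $e$ of weight $\ell$ is replaced by edges of strictly smaller weight, the pair $(\ell, N_\ell)$ strictly decreases in the lex order. The inductive hypothesis produces a compact connected oriented $S^1$-manifold $M'$ realizing $G_W'$, and performing the corresponding equivariant connected sum of $M'$ with the standard piece — at fixed points in case (1), along an invariant $2$-sphere in case (2) — yields a manifold $M$ whose graph of weights is $G_W$.

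The base case is $\ell = 1$, where every vertex has weights $\{1, 1\}$ and $G_W$ is a disjoint union of cycles, with AHF reducing to $\mathrm{sign} = \sum_i \varepsilon_i$. Each cycle component must be realized individually: a $2$-cycle with opposite signs is $S^4 = S(1, 1)$, and more general cycles are assembled from explicit models of $S^1$-equivariant $S^2$-bundles over $S^2$ and from equivariant connected sums of $S^4$'s along weight-$1$ invariant $2$-spheres whose fixed-point configurations match the prescribed cycle. The resulting pieces are then joined into a single connected manifold by equivariant connected sum along free $S^1$-orbits, which by Remark \ref{non-uniqueness} does not alter the graph of weights. The main obstacle of the argument lies precisely in this base case: once $\ell = 1$, the weight-reduction operations of Section \ref{s5} no longer apply, so one must supply direct constructions for every admissible cycle pattern — in particular for $2$-cycles with equal-sign endpoints (where the normal Euler numbers are forced to be $\pm 2$) and for cycles of length greater than two with arbitrary sign patterns — whereas the higher-weight inductive step is a clean transcription of the splitting operations already developed in Section \ref{s5}.
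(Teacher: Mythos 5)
Your inductive step---choosing an edge of maximal label $\ell>1$, using admissibility of $n_e=(\varepsilon_i a+\varepsilon_j b)/\ell$ together with $a,b<\ell$ to force either $\varepsilon_i=\varepsilon_j$ with $a+b=\ell$ or $\varepsilon_i=-\varepsilon_j$ with $a=b$, splitting off $P(a,a+b)$, $Q(a,a+b)$, or $P\# Q(\ell-a,\ell)$, and checking that admissibility and the Atiyah--Hirzebruch identity transfer to the reduced graph by subtracting the identity for the standard piece---is exactly the paper's argument, including the bookkeeping that only the number of maximal-label edges need decrease at each stage.

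The gap is in your base case, and it comes from reading more into ``graph of weights of $M$'' than Definition \ref{graphofweights} asserts. Condition (6) of that definition constrains only edges whose label is \emph{greater than} $1$; an edge of label $1$ is not required to correspond to an invariant $2$-sphere, so the incidence pattern of the label-$1$ edges imposes no geometric condition on $M$ at all (whether such spheres exist is precisely the open Question \ref{quesphere}). Consequently, once every label is $1$, Equation \eqref{ASformula} forces the number of $+$ vertices to equal the number of $-$ vertices, say $k$ each, and the equivariant connected sum along free orbits of $k$ copies of $S(1,1)$ from Example \ref{S4} already has $G_W$ as a graph of weights for \emph{any} wiring of the label-$1$ edges; this is the paper's Lemma \ref{weight1}, and it is two lines. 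Your plan to realize each cycle component of the label-$1$ graph individually is not only unnecessary but cannot be carried out: a $2$-cycle with two $+$ vertices contributes $2\,\frac{(1+z)^2}{(1-z)^2}$, which is not constant, so such a component violates \eqref{ASformula} on its own and is not realizable in isolation as a closed $S^1$-manifold. The ``main obstacle'' you identify is therefore not an obstacle for the theorem as stated; insisting that label-$1$ cycles be realized by invariant spheres would be proving a strictly stronger statement, which the paper does not claim.
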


\begin{proof}
We shall mean that the graph $G_W$ satisfies the Atiyah-Hirzebruch formula (Equation \eqref{ASformula}) in the following sense: if $G_W$ has vertices $v_1,\cdots,v_m$ and a vertex $v_i$ has sign $\varepsilon_i$ and edges $e_{i1}$ and $e_{i2}$ with labels $w_{i1}$ and $w_{i2}$, respectively, then
\begin{equation} \label{eq4}
\displaystyle 
\sum_{i=1}^m \varepsilon_i\, {\frac{(1+z^{w_{i1}})(1+z^{w_{i2}})}{(1-z^{w_{i1}})(1-z^{w_{i2}})}}
\end{equation}
is a constant for all indeterminates $z$.

The idea of proof is analogous to that of Theorem \ref{ReduceWeight}. Let $e$ be an edge whose label $w_e$ is the biggest among the labels of the edges of $G_W$; let $p$ and $q$ be the vertices of $e$, and let $w$ ($w'$) be the label of the other edge of $p$ ($q$, respectively). If $w_e=1$ or $w_e=2$, then the theorem follows from Lemma \ref{weight1}. Thus, from now on, we assume that $w_e>2$. By the definition of a graph of weights, $w_e$ and $w$ are relatively prime and similarly for $w_e$ and $w'$. In particular, $w,w'<w_e$. Because $G_W$ is admissible, the Euler number $n_e=\frac{\varepsilon_p w+\varepsilon_q w'}{w_e}$ of $e$ is an integer. Thus, one of the following cases holds.

\begin{enumerate}
\item $\varepsilon_p=\varepsilon_q=+1$ and $w+w'=w_e$.
\item $\varepsilon_p=\varepsilon_q=-1$ and $w+w'=w_e$.
\item $\varepsilon_p=-\varepsilon_q$ and $w=w'$.
\end{enumerate}

Suppose that Case (1) holds. Assume that there exists a 4-dimensional compact connected oriented $S^1$-manifold $M$ whose graph of weights is $G_W$; see Figure \ref{fig13a}. As described in the third paragraph of Section \ref{s5.1}, by equivariant splitting, we can decompose $M$ into another 4-dimensional compact connected oriented $S^1$-manifold $M'$ and $P(w,w+w')$; their graphs are the left (denoted $G_W'$) and right figures of Figure \ref{fig13b}.

\begin{figure}
\centering
\begin{subfigure}[b][5.5cm][s]{.3\textwidth}
\centering
\vfill
\begin{tikzpicture}[state/.style ={circle, draw}]
\node[state] (a)  {$p,+$};
\node[state] (b) [above=of a] {$q,+$};
\node[state] (c) [left=of a] {};
\node[state] (d) [left=of b] {};
\path (a) edge node[below] {$w$} (c);
\path (b) edge node[above] {$w'$} (d);
\path (a) edge node[right] {$w+w'$} (b);
\end{tikzpicture}
\vfill
\caption{$G_W$}\label{fig13a}
\vspace{\baselineskip}
\end{subfigure}\qquad
\begin{subfigure}[b][5.5cm][s]{.6\textwidth}
\centering
\vfill
\begin{tikzpicture}[state/.style ={circle, draw}]
\node[state] (a) {$p',+$};
\node[state] (b) [below left=of a] {};
\node[state] (c) [above left=of a] {};
\node[state] (d) [right=of a]{$q_1,-$};
\node[state] (e) [below right=of d] {$q_0,+$};
\node[state] (f) [above right=of d] {$q_2,+$};
\path (a) edge node[right] {$w$} (b);
\path (a) edge node [right] {$w'$} (c);
\path (d) edge node[left] {$w$} (e);
\path (e) edge node [right] {$w+w'$} (f);
\path (d) edge node [left] {$w'$} (f);
\end{tikzpicture}
\vfill
\caption{$G_W'$ and graph of $P(w,w+w')$}\label{fig13b}
\vspace{\baselineskip}
\end{subfigure}\qquad
\caption{Splitting of $G_W$ into $G_W'$ and graph of  $P(w,w+w')$}\label{fig13}
\end{figure}
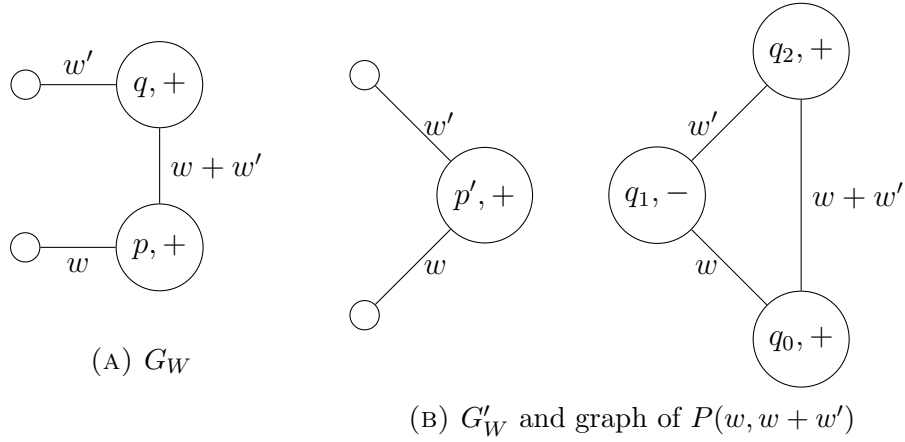

The right figure of Figure \ref{fig13b}, a graph of weights for $P(w,w+w')$, satisfies Equation \eqref{ASformula}; it is
\begin{equation} \label{eq5}
\displaystyle \frac{(1+z^w)(1+z^{w+w'})}{(1-z^w)(1-z^{w+w'})}-\frac{(1+z^w)(1+z^{w'})}{(1-z^w)(1-z^{w'})}+\frac{(1+z^{w'})(1+z^{w+w'})}{(1-z^{w'})(1-z^{w+w'})}=1,
\end{equation}
which is a constant for all indeterminates $z$.

We consider \eqref{ASformula} for $G_W'$;
\begin{equation} \label{eq6}
\displaystyle 
\sum_{v_i' \in V(G_W')}\varepsilon_i'\, {\frac{(1+z^{w_{i1}'})(1+z^{w_{i2}'})}{(1-z^{w_{i1}'})(1-z^{w_{i2}'})}}.
\end{equation}
The graph $G_W'$ is obtained from $G_W$ by shrinking the edge $e$ to a vertex $p'$, removing vertices $p$ and $q$. The vertex set of $G_W'$ minus $p'$ is equal to the vertex set of $G_W$ minus $p$ and $q$. In \eqref{eq6}, the vertex $p'$ of $G_W'$ contributes the term 
\begin{center}
$\displaystyle \frac{(1+z^{w})(1+z^{w'})}{(1-z^{w})(1-z^{w'})}$.
\end{center}
The sum of this term with \eqref{eq5} is
\begin{center}
$\displaystyle \frac{(1+z^w)(1+z^{w+w'})}{(1-z^w)(1-z^{w+w'})}+\frac{(1+z^{w'})(1+z^{w+w'})}{(1-z^{w'})(1-z^{w+w'})},$
\end{center}
which are precisely the terms that $p$ and $q$ contribute in \eqref{eq4}. In other words, the sum of \eqref{eq5} and \eqref{eq6} is exactly \eqref{eq4}. This means that if $G_W$ satisfies Equation \eqref{ASformula}, then so does $G_W'$, and vice versa. The manifold $M$ splits equivariantly into $M'$ and $P(w,w+w')$, and $M$ is an equivariant connected sum of $M'$ and $P(w,w+w')$. Therefore, the existence of such a manifold $M$ for $G_W$ is equivalent to the existence of $M'$ for $G_W'$, which has one less edge with label $w_e$. Moreover, $G_W$ is admissible if and only if $G_W'$ is admissible.

Case (2) is completely analogous. In this case, the labels of edges of $p$ and $q$ are the same as Case (1) but their signs are reversed. Thus instead of $P(w,w+w')$ and its graph (Figure \ref{fig1-2}), we use $Q(w,w+w')$ and its graph (Figure \ref{fig1-3}).

Suppose that Case (3) holds. Assume that $\varepsilon_p=-\varepsilon_q=+1$; the other case $\varepsilon_p=-\varepsilon_q=-1$ is analogous. Assume that there exists a 4-dimensional compact connected oriented $S^1$-manifold $M$ whose graph of weights is $G_W$; see Figure \ref{fig14a}. As described in the last paragraph of Section \ref{s5.1}, by equivariant splitting, we can decompose $M$ into another 4-dimensional compact connected oriented $S^1$-manifold $M'$ and $P \# Q(w_e-w,w)$; their graphs are the left (denoted $G_W'$) and right figures of Figure \ref{fig14b}.

\begin{figure}
\centering
\begin{subfigure}[b][4.5cm][s]{.4\textwidth}
\centering
\vfill
\begin{tikzpicture}[state/.style ={circle, draw}]
\node[state] (a) {$q,-$};
\node[state] (b) [left=of a] {};
\node[state] (c) [above=of a] {$p,+$};
\node[state] (d) [left=of c]{};
\path (a) edge node[below] {$w$} (b);
\path (a) edge node [right] {$w_e$} (c);
\path (c) edge node[above] {$w$} (d);
\end{tikzpicture}
\vfill
\caption{$G_W$}\label{fig14a}
\end{subfigure}\qquad
\begin{subfigure}[b][4.5cm][s]{.6\textwidth}
\centering
\vfill
\begin{tikzpicture}[state/.style ={circle, draw}]
\node[state] (a) {$q',-$};
\node[state] (b) [left=of a] {};
\node[state] (c) [above=of a] {$p',+$};
\node[state] (d) [left=of c]{};
\node[state] (e) [right=of a] {$p_1,+$};
\node[state] (f) [above=of e] {$p_2,-$};
\node[state] (g) [right=of e] {$p_4,-$};
\node[state] (h) [right=of f] {$p_3,+$};
\path (a) edge node[below] {$w$} (b);
\path (a) edge node [left] {$w_e-w$} (c);
\path (c) edge node[above] {$w$} (d);
\path (e) edge node[left] {$w_e-w$} (f);
\path (e) edge node[below] {$w$} (g);
\path (f) edge node[above] {$w$} (h);
\path (g) edge node[right] {$w_e$} (h);
\end{tikzpicture}
\vfill
\caption{$G_W'$ and graph of $P \# Q(w_e-w,w)$}\label{fig14b}
\end{subfigure}\qquad
\caption{Splitting of $G_W$ into $G_W'$ and graph of $P \# Q(w_e-w,w)$}\label{fig14}
\end{figure}
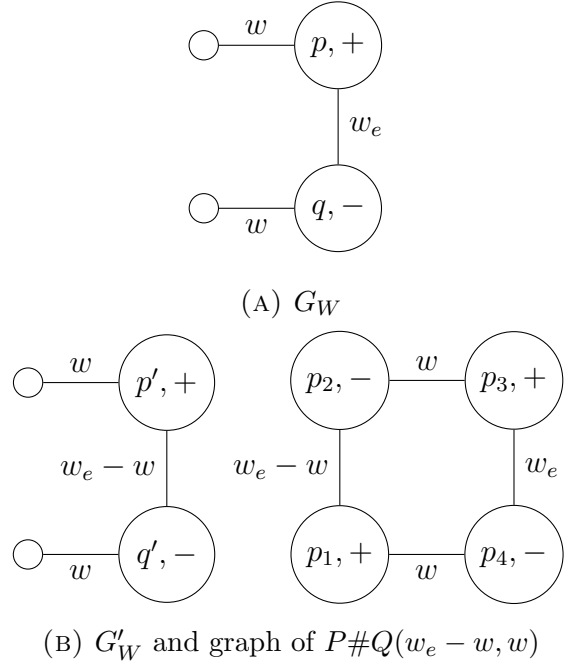

The right figure of Figure \ref{fig14b}, which is a graph of weights for $P \# Q(w_e-w,w)$, satisfies Equation \eqref{ASformula}; it is
\begin{multline} \label{eq7}
\displaystyle -\frac{(1+z^w)(1+z^{w_e})}{(1-z^w)(1-z^{w_e})}+\frac{(1+z^w)(1+z^{w_e-w})}{(1-z^w)(1-z^{w_e-w})}\\+\frac{(1+z^{w})(1+z^{w_e})}{(1-z^{w})(1-z^{w_e})}-\frac{(1+z^{w})(1+z^{w_e-w})}{(1-z^{w})(1-z^{w_e-w})}=0,
\end{multline}
which is a constant for all indeterminates $z$.

We consider \eqref{ASformula} for $G_W'$;
\begin{equation} \label{eq8}
\displaystyle 
\sum_{v_i' \in V(G_W')}\varepsilon_i'\, {\frac{(1+z^{w_{i1}'})(1+z^{w_{i2}'})}{(1-z^{w_{i1}'})(1-z^{w_{i2}'})}}.
\end{equation}
The graph $G_W'$ is obtained from $G_W$ by replacing the label $w_e$ of the edge $e$ with $w_e-w$. The vertex set of $G_W'$ is equal to the vertex set of $G_W$. In \eqref{eq8}, the vertices $p'$ and $q'$ (that are $p$ and $q$ in $G_W$) of $G_W'$ contribute the terms
\begin{center}
$\displaystyle \frac{(1+z^w)(1+z^{w_e-w})}{(1-z^w)(1-z^{w_e-w})}-\frac{(1+z^{w})(1+z^{w_e-w})}{(1-z^{w})(1-z^{w_e-w})}$.
\end{center}
The sum of these terms with \eqref{eq7} is
\begin{center}
$\displaystyle \frac{(1+z^w)(1+z^{w_e})}{(1-z^w)(1-z^{w_e})}-\frac{(1+z^w)(1+z^{w_e})}{(1-z^w)(1-z^{w_e})},$
\end{center}
which are precisely the terms that $p$ and $q$ contribute in \eqref{eq4}. In other words, the sum of \eqref{eq7} and \eqref{eq8} is exactly \eqref{eq4}. This means that if $G_W$ satisfies Equation \eqref{ASformula}, then so does $G_W'$, and vice versa. The manifold $M$ splits equivariantly into $M'$ and $P \# Q(w_e-w,w)$, and $M$ is an equivariant connected sum of $M'$ and $P \# Q(w_e-w,w)$. Therefore, the existence of such a manifold $M$ for $G_W$ is equivalent to the existence of $M'$ for $G_W'$, which has one less edge with label $w_e$. Moreover, $G_W$ is admissible if and only if $G_W'$ is admissible.

We repeat the above procedure. Assume that an admissible graph $G_W'$ of weights satisfies \eqref{ASformula}, consider the edge whose label is the biggest, and so on. Repeating the above, the existence of a 4-dimensional compact connected oriented $S^1$-manifold whose graph of weights is $G_W$, reduces to the existence of another 4-dimensional compact connected oriented $S^1$-manifold $M''$ whose graph $G_W''$ of weights satisfies Equation \eqref{ASformula} and only has edges of label 1. By Lemma \ref{weight1} below, such a manifold $M''$ exists.
\end{proof}

\begin{lemma} \label{weight1}
Let $G_W$ be a graph of weights that has points only, satisfies \eqref{ASformula}, and the label of every edge of $G_W$ is 1. Then there exists a 4-dimensional compact connected oriented $S^1$-manifold $M$, whose graph of weights is $G_W$.
\end{lemma}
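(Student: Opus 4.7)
The plan is to read \eqref{ASformula} as a combinatorial constraint on $G_W$ and then realize $G_W$ by assembling a small list of standard $S^1$-manifolds via the equivariant connected sum operations of Section \ref{s5.1}.

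First I would extract the combinatorial consequence of the hypothesis. Since every vertex of $G_W$ has both weights equal to $1$ and there are no surface vertices, \eqref{ASformula} collapses to $\sum_{i=1}^m \varepsilon_i (1+z)^2/(1-z)^2 = c$, constant in $z$; because $(1+z)^2/(1-z)^2$ is a non-constant rational function, this forces $\sum_i \varepsilon_i = 0$ (and $c=0$). Thus $G_W$ has equally many positive and negative vertices, and because each vertex carries exactly two edges the graph is $2$-regular. Since an invariant $2$-sphere in an $S^1$-manifold always has two distinct fixed points, any realizable graph of weights is self-loop free, so I may assume $G_W$ itself has no self-loops. Hence $G_W$ is a disjoint union of cycles.

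Next I would fix a small library of basic realizations: $S^4 = S(1,1)$ realizes the $2$-cycle with signs $(+,-)$; $S^2 \times S^2$ with its diagonal $S^1$-action realizes the alternating $4$-cycle $(+,-,+,-)$; and the Hirzebruch surface $\mathbb{F}_2$ (together with its orientation reversal $-\mathbb{F}_2$) with a suitable $S^1$-subaction of its natural $T^2$-action realizes $4$-cycles containing edges of Euler number $\pm 2$ between like-sign vertices. The key operation is equivariant connected sum at two fixed points of opposite signs both of which have weights $\{1,1\}$, from Section \ref{s5.1}: it removes the two glued points and rewires the four remaining half-edges, which on the graph level is precisely the operation that merges two cycles through those vertices into one longer cycle. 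Iterating this, any alternating cycle of length $2\ell$ can be assembled from $\ell-1$ copies of $S^2 \times S^2$ (or from $S^4$ together with $\ell-2$ copies of $S^2 \times S^2$), and any cycle containing like-sign edges is assembled by additionally inserting $\mathbb{F}_2$-type blocks at those edges. After each cycle of $G_W$ is realized, I would combine the resulting pieces into one connected manifold $M$ by equivariant connected sums along free orbits, an operation that leaves graphs of weights unchanged.

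The main obstacle I anticipate is realizing cycles whose own sign sum is nonzero: in isolation such a cycle cannot be the graph of a $4$-manifold, as this would contradict $\sum_i \varepsilon_i = 0$ and hence \eqref{ASformula}. The global identity from the first step guarantees that such ``unbalanced'' cycles can always be paired into groups of joint sign sum zero, and each such group must be realized jointly as a single equivariant connected sum of basic pieces rather than one cycle at a time. Verifying in every sign pattern that the combined assembly reproduces precisely the prescribed edges, signs, and Euler numbers is the principal technical content of the proof.
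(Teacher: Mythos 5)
Your opening step is exactly the paper's: setting all labels to $1$ in \eqref{ASformula} forces $\sum_i \varepsilon_i \,(1+z)^2/(1-z)^2$ to be constant, hence zero, so the number $k$ of $+$ vertices equals the number of $-$ vertices. After that, however, you take on a much harder task than the lemma requires, and you do not finish it.

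The point you are missing is in Definition \ref{graphofweights}: the only geometric condition on edges is item (6), which applies solely to edges of label \emph{greater than} $1$. A label-$1$ edge is not required to correspond to any invariant $2$-sphere or indeed to any geometric object; for a point, one only needs its two edge labels to equal its two weights. Consequently, once you know there are $k$ vertices of each sign, all with weights $\{1,1\}$, the equivariant connected sum along free orbits of $k$ copies of $S(1,1)=S^4$ (Example \ref{S4}) already has $G_W$ as a graph of weights --- whatever the incidence structure of the label-$1$ edges of $G_W$ happens to be. That is the paper's entire proof. Your worry about self-loops, about unbalanced cycles, and about Euler numbers of label-$1$ edges all evaporate for the same reason (and the Euler number of a label-$1$ edge is in any case determined combinatorially by Definition \ref{EulNum} from the signs and other weights, so it is not extra data to be matched).

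What you are actually trying to prove --- that every label-$1$ edge can be realized by an invariant $2$-sphere joining the two corresponding fixed points --- is essentially Question \ref{quesphere}, which the paper explicitly leaves open. Your plan for it has real gaps: the claim that connected sum at two opposite-sign fixed points ``rewires the four remaining half-edges'' and merges two cycles presupposes that the label-$1$ edges at those points are already carried by invariant spheres, which is the very thing in question; the realization of like-sign label-$1$ edges by Hirzebruch-surface blocks is asserted but not checked; and you yourself defer ``the principal technical content'' to an unperformed verification. None of this is needed for the lemma as stated.
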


\begin{proof}
Let $v_1,\cdots,v_m$ be the vertices of $G_W$, and let $\varepsilon_i$ be the sign of $v_i$. By the assumption, $G_W$ satisfies \eqref{ASformula}; because every label is 1, it is
\begin{center}
$\displaystyle \sum_{i=1}^m \varepsilon_i\, {\frac{(1+z)(1+z)}{(1-z)(1-z)}}=\sum_{i=1}^n \varepsilon_i \, \{(1+z)(1+z+z^2+\cdots)\}^2=\sum_{i=1}^n \varepsilon_i \,  (1+2 \sum_{j=1}^\infty z^j)^2$,
\end{center}
and is a constant for all indeterminates $z$. For the formula to be a constant for all indeterminates $z$, we must have that the formula is identically zero and the number $k$ of vertices with sign $+1$ is equal to the number of vertices with sign $-1$.

Let $M$ be an equivariant connected sum along free orbits of $k$-copies of $S(1,1)$ of Example \ref{S4}. Each $S(1,1)$ has two fixed points $(0,0,1)$ with sign $+1$ and $(0,0,-1)$ with sign $-1$ that have weights $\{1,1\}$. Thus, $M$ has $k$ fixed points with sign $+1$ and weights $\{1,1\}$ and $k$ fixed points with sign $-1$ and weights $\{1,1\}$. By Definition \ref{graphofweights}, $G_W$ is a graph of weights of $M$. \end{proof}

\subsection{Minimal manifold's graph has vanishing Euler number}

For a 4-dimensional oriented $S^1$-manifold has isolated fixed points, its minimal manifold admits a graph of weights whose edges have vanishing Euler numbers.

\begin{cor} \label{n=0}
Let the circle act on a 4-dimensional compact oriented manifold $M$ with isolated fixed points. Then we can decompose $M$ into an equivariant connected sum of $M_0$ and copies of $\pm \mathbb{CP}^2$, where every weight at a fixed point of $M_0$ is 1, and $M_0$ admits a graph $G_W$ of weights for which $n_e=0$ for all edges $e$ of $G_W$. 
\end{cor}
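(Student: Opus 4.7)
The plan is to bootstrap directly off of Theorem~\ref{ReduceWeight} and Theorem~\ref{graph}. By Theorem~\ref{ReduceWeight}, $M$ decomposes as an equivariant connected sum of some 4-dimensional compact oriented $S^1$-manifold $M_0$ with every normal weight equal to $1$, together with copies of $\pm\mathbb{CP}^2$ and $\Sigma_1$. Since $M$ is assumed to have only isolated fixed points, I would first argue that the $\Sigma_1$ summands never actually appear and that $M_0$ itself has only isolated fixed points. This is because every splitting used in the proof of Theorem~\ref{ReduceWeight} is one of the three local moves of Section~\ref{s5.1}, each of which modifies only a neighborhood of a pair of isolated fixed points connected by an invariant $2$-sphere, and only removes $P(a,a+b)$, $Q(a,a+b)$, or $P\#Q(\ell-a,\ell)$ — all of which have only isolated fixed points. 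In particular no fixed surface is ever created, so $M_0$ inherits the property of having only isolated fixed points.

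Next, since every fixed point of $M_0$ has weights $\{1,1\}$, I would apply Theorem~\ref{graph} to $M_0$ to obtain a graph of weights $G_W$ with the additional property that any edge of label $1$ joins two vertices whose contributions in Equation~\eqref{weight1'} lie on opposite sides of the rewriting~\eqref{weight1re}. For a fixed point $p_i$ of $M_0$ the multiplicity $k_i$ of weight $1$ equals $2$, so $p_i$ lies on the left side of \eqref{weight1re} precisely when $\varepsilon_i=+1$ and on the right side precisely when $\varepsilon_i=-1$. Hence the sign-opposition property furnished by Theorem~\ref{graph} translates directly into: every edge of $G_W$ connects a vertex of sign $+1$ to a vertex of sign $-1$.

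Finally, I would evaluate the Euler number of each edge using Definition~\ref{EulNum}. If $e$ has endpoints $p$ and $q$ with weights $w$ and $w'$ at those vertices other than $w_e$, then in $M_0$ we have $w=w'=w_e=1$, so
\[
n_e \;=\; \frac{\varepsilon_p\cdot 1+\varepsilon_q\cdot 1}{1}\;=\;\varepsilon_p+\varepsilon_q,
\]
and by the preceding paragraph this sum is $0$ for every edge of $G_W$. Combining this with the decomposition from the first paragraph yields the corollary. I do not expect a serious obstacle: the only substantive point is the verification in the first paragraph that the hypothesis of isolated fixed points on $M$ propagates to $M_0$, which is immediate from inspection of the three local moves in Section~\ref{s5.1}; everything else is a direct reading of Theorem~\ref{graph} and Definition~\ref{EulNum}.
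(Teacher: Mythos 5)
Your proposal is correct and follows essentially the same route as the paper: apply Theorem~\ref{ReduceWeight}, observe from its proof that $M_0$ retains only isolated fixed points, invoke the sign-opposition property of Theorem~\ref{graph} for edges of label $1$, and compute $n_e=\varepsilon_p+\varepsilon_q=0$ from Definition~\ref{EulNum}. Your extra verification that the local splitting moves never introduce fixed surfaces is a point the paper passes over with only a brief remark, so spelling it out is a welcome (if minor) addition.
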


\begin{proof}
Because $M$ has isolated fixed points only, by Theorem \ref{ReduceWeight}, we can decompose $M$ into an equivariant connected sum of $M_0$ and copies of $\pm \mathbb{CP}^2$, where every weight at a fixed point of $M_0$ is 1, and the proof of Theorem \ref{ReduceWeight} implies that $M_0$ also has only isolated fixed points.

Let $G_W$ be a graph of weights of $M_0$ that satisfies the properties of Theorem \ref{graph}. Let $e$ be any edge of $G_W$. Let $p_i$ and $p_j$ be the vertices of $e$. By the property of Theorem \ref{graph}, $\varepsilon_i=-\varepsilon_j$. Since both fixed points $p_i$ and $p_j$ have weights $\{1,1\}$, by Definition \ref{OtherWeight}, the other weight of $w_e$ at $p_i$ ($p_j$) is 1. By Definition \ref{EulNum}, the Euler number of $e$ is
\begin{center}
$\displaystyle n_e=\frac{\varepsilon_i \cdot 1 + \varepsilon_j \cdot 1}{1}=0$. 
\end{center}
\end{proof}

%\medskip

%\noindent  D. Jang \\ Pusan National University, Department of Mathematics
 
% \medskip
 
%\noindent   {\it Mailing address:}  2, Busandaehak-ro 63beon-gil, Geumjeong-gu, Busan, Republic of Korea
 
% \medskip

%\noindent   {\it E-mail address:} donghoonjang@pusan.ac.kr

%\medskip

%\noindent  O. R. Musin \\ University of Texas Rio Grande Valley, School of Mathematical and
% Statistical Sciences, One West University Boulevard, Brownsville, TX, 78520, USA.}
 
% \medskip
 
%\noindent   {\it Mailing address:}  One West University Boulevard, Brownsville, TX, 78520, USA.
 
% \medskip

%\noindent   {\it E-mail address:} oleg.musin@utrgv.edu

%\author {Donghoon Jang  and Oleg R. Musin}

\end{document}